\newif\ifArXiv 	

\ArXivtrue

\ifArXiv
         \documentclass[review,3p,number]{elsarticle_arXiv}    
         \usepackage{amsthm} 
         \usepackage[font=normalsize]{caption}    
\else
	\documentclass[journal,twoside,web]{ieeecolor}
	\usepackage{generic}
	\usepackage{cite}
\fi

\usepackage{amsmath,amssymb,amsfonts}
\usepackage{algorithmic}
\usepackage{graphicx}
\usepackage{textcomp}
\usepackage{hyperref}
\usepackage{soul}
\usepackage{bm}
\usepackage{balance}

\usepackage{booktabs}

\newcommand{\R}{\mathbb{R}}
\newcommand{\K}{\mathcal{K}}
\newcommand{\E}{\mathbb{E}}
\newcommand{\B }{\mathbb{B}}
\newcommand{\mc}{\mathcal}
\newcommand{\ego}{\mathbb E}
\newcommand{\ob}{\mathbb O}
\newcommand{\dist}{\textnormal{dist}}
\newcommand{\pen}{\textnormal{pen}}
\newcommand{\sd}{\textnormal{sd}}

\ifArXiv
	\theoremstyle{plain}
\fi
\newtheorem{theorem}{Theorem}
\newtheorem{lemma}{Lemma}
\newtheorem{proposition}{Proposition}

\ifArXiv
	\theoremstyle{definition}
\fi

\setcounter{assumption}{0}

\ifArXiv
	\theoremstyle{remark}
\fi
\newtheorem{remark}{Remark}

\ifArXiv
        \begin{document}
        \begin{frontmatter}
        \title{Optimization-Based Collision Avoidance}
        \author[ucb]{Xiaojing Zhang}
        \author[ethz]{Alexander Liniger}
        \author[ucb]{Francesco Borrelli}
        \address[ucb]{Model Predictive Control Laboratory, Department of Mechanical Engineering, University of California, Berkeley, USA}
        \address[ethz]{Automatic Control Laboratory, Department of Information Technology and Electrical Engineering, ETH Zurich, Switzerland}
        \tnotetext[fninfo]{E-mail addresses: \{xiaojing.zhang, fborrelli\}@berkeley.edu, liniger@control.ee.ethz.ch}
        \begin{abstract}	
        This paper presents a novel method for reformulating non-differentiable collision avoidance constraints into \emph{smooth} nonlinear constraints using strong duality of convex optimization. We focus on a controlled object whose goal is to avoid obstacles while moving in an $n$-dimensional space. The proposed reformulation does not introduce approximations, and applies to general obstacles and controlled objects that can be represented as the union of convex sets. We connect our results with the notion of signed distance, which is widely used in traditional trajectory generation algorithms. Our method can be applied to generic navigation and trajectory planning tasks, and the smoothness property allows the use of general-purpose gradient- and Hessian-based optimization algorithms. Finally, in case a collision cannot be avoided, our framework allows us to find ``least-intrusive" trajectories, measured in terms of penetration. We demonstrate the efficacy of our framework on a quadcopter navigation and automated parking problem, and our numerical experiments suggest that the proposed methods enable real-time optimization-based trajectory planning problems in tight environments.  Source code of our implementation is provided at \url{https://github.com/XiaojingGeorgeZhang/OBCA}.
        
        \emph{Keywords:} obstacle avoidance, collision avoidance, path planning, navigation in tight environments, autonomous parking
        \end{abstract}
        
        \end{frontmatter}
        
\else
        \def\BibTeX{{\rm B\kern-.05em{\sc i\kern-.025em b}\kern-.08em
            T\kern-.1667em\lower.7ex\hbox{E}\kern-.125emX}}
        \markboth{\journalname, VOL. XX, NO. XX, XXXX 2018}
        {Zhang \MakeLowercase{\textit{et al.}}: Optimization-based Collision Avoidance}
        \begin{document}
        \title{Optimization-based Collision Avoidance}
        \author{Xiaojing Zhang,  Alexander Liniger,  and Francesco Borrelli
        \thanks{This research has been partially funded by the Hyundai Center of Excellence at UC Berkeley.}
        \thanks{X.\ Zhang and F.\ Borrelli are with the Model Predictive Control Laboratory, Department of Mechanical Engineering, University of California at Berkeley, CA 94720, USA (email: xiaojing.zhang@berkeley.edu, fborrelli@berkeley.edu). }
        \thanks{A.\ Liniger is with the Automatic Control Laboratory, Department of Information Technology and Electrical Engineering, ETH Zurich, 8092 Zurich, Switzerland (liniger@control.ee.ethz.ch).}
        }

	\maketitle

        \begin{abstract}
        This paper presents a novel method for reformulating non-differentiable collision avoidance constraints into \emph{smooth} nonlinear constraints using strong duality of convex optimization. We focus on a controlled object whose goal is to avoid obstacles while moving in an $n$-dimensional space. The proposed reformulation does not introduce approximations, and applies to general obstacles and controlled objects that can be represented as the union of convex sets. We connect our results with the notion of signed distance, which is widely used in traditional trajectory generation algorithms. Our method can be applied to generic navigation and trajectory planning tasks, and the smoothness property allows the use of general-purpose gradient- and Hessian-based optimization algorithms. Finally, in case a collision cannot be avoided, our framework allows us to find ``least-intrusive" trajectories, measured in terms of penetration. We demonstrate the efficacy of our framework on a quadcopter navigation and automated parking problem, and our numerical experiments suggest that the proposed methods enable real-time optimization-based trajectory planning problems in tight environments.  Source code of our implementation is provided at \url{https://github.com/XiaojingGeorgeZhang/OBCA}.
        \end{abstract}
        
        \begin{IEEEkeywords}
        obstacle avoidance, collision avoidance, path planning, navigation in tight environments, autonomous parking
        \end{IEEEkeywords}
\fi

\section{Introduction}
\label{sec:introduction}
\ifArXiv
       Maneuvering  
\else
	\IEEEPARstart{M}{aneuvering} 
\fi
autonomous systems in an environment with obstacles is a challenging problem that arises in a number of practical applications including robotic manipulators and trajectory planning for autonomous systems such as self-driving cars and quadcopters. In almost all of those applications, a fundamental feature is the system's ability to avoid collision with obstacles which are, for example, humans operating in the same area, other autonomous systems, or static objects such as walls.

Optimization-based trajectory planning algorithms such as Model Predictive Control (MPC) have received significant attention recently, ranging from (unmanned) aircraft to robots to autonomous cars \cite{RichardsHow_ACC2002, BorrelliKeviczkyBalas_CDC2004, Diehl2006, BlackmoreWilliamsManipularObstacleAvoid2006, gao2010predictive, GoulartTrajectory2011, BlackmoreOnoWilliams2011, Salmah2013,  SchulmanAbeelColisionChecking2014, Liniger_OCAM2015, rosolia2017autonomousrace, FunkeGerdes2017}. 
This can be attributed to the increase in computational resources, the availability of robust numerical algorithms for solving optimization problems, as well as MPC's ability to systematically encode system dynamics and constraints inside its formulation. 

One fundamental challenge in optimization-based trajectory planning is the appropriate formulation of collision avoidance constraints, which are known to be non-convex and computationally difficult to handle in general. While a number of formulations have been proposed in the literature for dealing with collision avoidance constraints, they are typically limited by one of the following features: $(i)$ The collision avoidance constraints are approximated through linear constraint, and it is difficult to establish the approximation error \cite{SchulmanAbeelColisionChecking2014}; $(ii)$ Existing formulations focus on point-mass controlled objects, and are not applicable to full-dimensional objects; $(iii)$ When the obstacles are polyhedral, then the collision avoidance constraints are often reformulated using integer variables \cite{GrossmannReviewDisjunctive2002}. 
While this reformulation is attractive for linear systems with convex constraints since in this case a mixed-integer convex optimization problem can be solved, integer variables should generally be avoided when dealing with nonlinear systems when designing real-time controllers for robotic systems.

In this paper, we focus on a \emph{controlled object} that moves in a general $n$-dimensional space while avoiding obstacles, and propose a novel approach for modeling obstacle avoidance constraints that overcomes the aforementioned limitations. Specifically, the contributions of this paper can be summarized as follows:
\begin{itemize}
    \item We show that if the  controlled object and the obstacles are described by convex sets such as polytopes or ellipsoids (or can be decomposed into a finite union of such convex sets), then the collision avoidance constraints can be exactly and non-conservatively reformulated as a set of smooth non-convex constraints. This is achieved by appropriately reformulating the \emph{distance}-function between two convex sets using strong duality of convex optimization. 
   
    \item We provide a second formulation for collision avoidance based on the notion of \emph{signed distance}, which  characterizes not only the distance between two objects but also their penetration. This reformulation allows us to compute ``least-intrusive" trajectories in case collisions cannot be avoided. 
        
    \item We demonstrate the efficacy of the proposed obstacle avoidance reformulations on a quadcopter trajectory planning problem and autonomous parking application, where the controlled vehicles must navigate in tight environments. We show that both the distance reformulation and the signed distance reformulation enable real-time path planning and find trajectories even in challenging circumstances.
\end{itemize}
Furthermore, since both our formulations allow the  incorporation of system dynamics and input constraints, the generated trajectories are \emph{kinodynamically feasible}, and hence can be tracked by simple low-level controllers.

This paper is organized as follows: Section \ref{sec:probDescription} introduces the problem setup. Section~\ref{sec:PointMass} presents the collision avoidance and minimum-penetration formulations for the case when the controlled object is a point mass. These results are then extended to full-dimensional controlled objects in Section~\ref{sec:FullDimObj}. Numerical experiments demonstrating the efficacy of the proposed method are given in Sections~\ref{sec:ex_quadcopter} and \ref{sec:ex_autParking}, and conclusions are drawn in Section~\ref{sec:conclusion}. The Appendix contains auxiliary results needed to prove the main results of the paper. 
The source code of a quadcopter navigation example and autonomous parking example described in Sections~\ref{sec:ex_quadcopter} and \ref{sec:ex_autParking} is provided at \url{https://github.com/XiaojingGeorgeZhang/OBCA}.

\subsection*{Related Work}
A large body of work exists on the topic of obstacle avoidance. In this paper, we do not review, or compare, optimization-based collision avoidance methods with alternative approaches such as those based on dynamic programming \cite{SchildbachDPParking2016}, reachability analysis \cite{TomlinPappasSastry1998, MargellosLygerosTAC2011,MargellosLygerosTCST2013},  graph search \cite{LikhachevFergusonPlanningLongDyn2009, DolgovThrunPathPlanning2010, LikhachevMultiAstar_IJRR2016}, (random) sampling \cite{LaValle2001, Ziegler2008, KaramanFrazzoli2011, BanzhafHybridCurvature2017}, or interpolating curves \cite{Vorobieva2014}. Indeed, collision avoidance problems are known to be NP-hard in general \cite{canny1988complexity}, and all practical methods constitute some sort of ``heuristics", whose performance depends on the specific problem and configuration at hand. 
In the following, we briefly review optimization-based approaches, and refer the interested reader to \cite{lavalle2006planning,Campbell2010, Katrakazas2015416, PadenFrazolliSurveyMotionPlanning2016, GonzalezReviewMotionPlanning2016} for a comprehensive review on existing trajectory planning and obstacle avoidance algorithms.

The basic idea in optimization-based methods is to express the collision avoidance problem as an optimal control problem, and then solve it using numerical optimization techniques. 
One way of dealing with obstacle avoidance is to use unconstrained optimization, in which case the objective function is augmented with ``artificial potential fields" that represent the obstacles \cite{Khatib1986, KalakrishnanSTOMP_2011, ParkITOMP_2012, ZuckerDraganCHOMP_2013, Gennert2016_BiRRTOpt}.
More recently, methods based on constrained optimization has attracted attention in the control community, due to its ability to explicitly formulate collision avoidance through constraints \cite{RichardsHow_ACC2002, BlackmoreWilliamsManipularObstacleAvoid2006, GoulartTrajectory2011, SchulmanAbeelColisionChecking2014}.
Broadly speaking, constrained optimization-based collision-avoidance algorithms can be divided into two cases based on the modeling of the controlled object: point-mass models and full-dimensional objects. Due to its conceptual simplicity, the vast majority of literature focuses on collision avoidance for point-mass models, and consider the shape of the controlled object by inflating the obstacles. The obstacles are generally assumed to be either polytopes or ellipsoids. For polyhedral obstacles, disjunctive programming can be used to ensure  collision avoidance, which is often reformulated as a mixed-integer optimization problem \cite{RichardsHow_ACC2002, BlackmoreWilliamsManipularObstacleAvoid2006, GrossmannReviewDisjunctive2002}. In case of ellipsoidal obstacles, the collision avoidance constraints can be formulated as a smooth non-convex constraint \cite{Rosolia2017,nageli2017real}, and the resulting optimization problem can be solved using generic non-linear programming solvers.

The case of full-dimensional controlled objects has, to be best of the authors' knowledge, not been widely studied in the context of optimization-based methods, with the exception of \cite{SchulmanAbeelColisionChecking2014, LiShao2015}. In \cite{LiShao2015}, the authors model the controlled object through its vertices and, under the assumption that all involved object are rectangles, ensure collision avoidance by keeping all vertices of the controlled object outside the obstacle. A more general way of handling collision avoidance for full-dimensional controlled object has been proposed in \cite{SchulmanAbeelColisionChecking2014} using the notion of signed distance, where the authors also propose a sequential linearization technique to deal with the non-convexity of the signed distance function.

The approach most closely related to our formulation is probably the work of \cite{GoulartTrajectory2011}, where the authors propose a smooth reformulation of the collision avoidance constraint for point-mass controlled objects and polyhedral obstacles. However, our approach differs from \cite{GoulartTrajectory2011} as $(i)$ our approach  generalizes to full-dimensional controlled objects,  and $(ii)$ we are, based on the notion of penetration, also able to compute least-intrusive trajectories in case collisions cannot be avoided. 

\subsection*{Notation}\label{sec:notation}
Given a proper cone $\mc K \subset\R^l$ and two vectors $a,b\in\R^l$, then $a\preceq_{\mc K} b$ is equivalent to $(b-a)\in\mc K$. If $\K = \R_+^l$ is the standard cone, then $\preceq_{\R_+^l}$ is equivalent to the standard (element-wise) inequality $\leq$. Moreover, $\|\cdot\|_*$ is the dual norm of $\|\cdot\|$, and $\K^* \subset\R^l$ is the dual cone of $\K$. 
The ``space" occupied by the controlled object (e.g., a drone, vehicle, or robot in general) is denoted as $\ego\subset\R^n$; similarly, the space occupied by the obstacles is denoted as $\ob\subset\R^n$.

\section{Problem Description}\label{sec:probDescription}
\subsection{Dynamics, Objective and Constraints}
We assume that the dynamics of the controlled object takes the form 
\begin{equation}\label{eq:stateDynamics}
    x_{k+1} = f(x_k, u_k),
\end{equation}
where $x_k\in\R^{n_x}$ is the state of the controlled object at time step $k$ given an initial state $x_0=x_S$, $u_k\in\R^{n_u}$ is the control input, and $f\colon\R^{n_x} \times \R^{n_u} \to \R^{n_x}$  describes the dynamics of the system. 
In most cases, the state $x_k$ contains information such as the position $p_k\in\R^n$ and angles $\theta_k\in\R^n$ of the controlled object, as well the velocities $\dot p_k$ and angular rates $\dot \theta_k$.
In this paper, we assume that no disturbance is present, and that the system is subject to input and state constraints of the form 
\begin{equation}\label{eq:stateInputConstraints}
	h(x_k, u_k )\leq0,
\end{equation}
where  $h:\R^{n_x} \times \R^{n_u} \to \R^{n_h}$, $n_h$ is the number of constraints, and the inequality in \eqref{eq:stateInputConstraints} is interpreted element-wise. Our goal is to find a control sequence, over a horizon $N$, which allows the controlled object to navigate from the initial state $x_S$ to its final state $x_F\in\R^{n_x}$, while optimizing some objective function $J = \sum_{k=0}^N \ell(x_k, u_k)$, where $\ell\colon \R^{n_x} \times \R^{n_u} \to \R$  is a stage cost, and avoiding $M\geq1$ obstacles $\ob^{(1)},\ob^{(2)},\ldots,\ob^{(M)} \subset \R^n$.
Throughout this paper, we assume that the functions $f(\cdot,\cdot)$, $h(\cdot,\cdot)$ and $\ell(\cdot,\cdot)$ are smooth.
Smoothness is assumed for simplicity, although all forthcoming statements apply equally to cases when those functions are twice continuously differentiable.

\subsection{Obstacle and Controlled Object Modeling}
Given the state $x_k$, we denote by $\ego(x_k)\subset\R^n$ the ``space" occupied by the controlled object at time $k$, which we assume is a subset of $\R^n$. The collision avoidance constraint at time $k$ is now given by\footnote{In this paper, we only consider collision avoidance constraints that are associated with the position and geometric shape of the controlled object, which are typically defined by its position $p_k$ and angles $\theta_k$. This is not a restriction of the theory as the forthcoming approaches can be easily generalized to collision avoidance involving other states, but done to simplify exposition of the material.}
\begin{equation}\label{eq:collAvoid}
	\ego(x_k) \cap \ob^{(m)} = \emptyset, \quad \forall m = 1 ,\ldots,M.  
\end{equation}
Constraint \eqref{eq:collAvoid} is non-differentiable in general, e.g., when the obstacles are polytopic \cite{GoulartTrajectory2011, SchulmanAbeelColisionChecking2014}. In this paper, we will remodel \eqref{eq:collAvoid} in such a way that both continuity and differentiability are preserved.
To this end, we assume that  the obstacles $\ob^{(m)}$ are convex compact sets with non-empty relative interior\footnote{Non-convex obstacles can often be approximated/decomposed as the union of convex obstacles}, and can be represented as
\begin{equation}\label{eq:convOb}
	\ob^{(m)} = \{y\in\R^n \colon A^{(m)} y \preceq_{\K} b^{(m)}\},
\end{equation}
where  $A^{(m)}\in\R^{l \times n}$, $b^{(m)}\in\R^l$, and $\K\subset\R^l$ is a closed convex pointed cone with non-empty interior. Representation \eqref{eq:convOb} is entirely generic since any compact convex set admits a conic representation of the form \eqref{eq:convOb} \cite[p.15]{rockafellar1970}. In particular, polyhedral obstacles can be represented as  \eqref{eq:convOb} by choosing $\K = \R_+^l$; in this case $\preceq_{\K}$ corresponds to the well-known element-wise inequality $\leq$. Likewise, ellipsoidal obstacles can be represented by letting $\K$ be the second-order cone, see \cite{boyd2004convex} for details.
To simplify the upcoming exposition, the same cone $\K$ is assumed for all obstacles; the extension to obstacle-specific cones $\K^{(m)}$ is straight-forward. 

In this paper, we will consider controlled objects $\E(x_k)$ that are modeled as \emph{point-masses} as well as \emph{full-dimensional} objects. In the former case, $\E(x_k)$ simply extracts the position $p_k$ from the state $x_k$, i.e.,
\begin{subequations}
\begin{equation}\label{eq:transfEgoPointMass}
	\ego(x_k) = p_k.
\end{equation} 
In the latter case, we will model the controlled object $\E(x_k)$ as the rotation and translation of an ``initial" convex set $\B \subset\R^n$, i.e.,
\begin{equation}\label{eq:transfEgoFullSet}
	\ego(x_k) = R(x_k) \B + t(x_k), \quad \B := \{y \colon G y \preceq_{\bar \K} g\},
\end{equation}
where $R\colon \R^{n_x} \to \R^{n \times n}$ is an (orthogonal) rotation matrix and $t\colon \R^{n_x} \to \R^n$ is the translation vector. The matrices $(G,g) \in \R^{h\times n} \times \R^h$ and the cone $\bar\K \subset\R^h$, which we assume is closed, convex and pointed, define the shape of our initial (compact) set $\B$ and are assumed known. Often, the rotation matrix $R(\cdot)$ depends on the angles $\theta_k$ of the controlled object, while the translation vector $t(\cdot)$ depends on the position $p_k$ of the controlled object.
We assume throughout that the functions $R(\cdot)$ and $t(\cdot)$ are smooth.
\end{subequations}

\subsection{Optimal Control Problem with Collision Avoidance}

By combining \eqref{eq:stateDynamics}--\eqref{eq:collAvoid}, the constrained finite-horizon optimal control problem with collision avoidance constraint is given by
\begin{equation}\label{eq:MPC_genCollAvoid}
	\begin{array}{lll}
		\displaystyle\min_{\mathbf x, \mathbf u}  & \displaystyle\sum_{k=0}^{N} \ell(x_k, u_k) \\
		\ 	\text{s.t.} & x_0 = x_S,~ x_{N+1} = x_F, \\
					& \!\!\!\!\!\!\!\!\!\!  \left. \begin{array}{ll}
					&x_{k+1} = f(x_k,u_k), \\
					& h(x_k, u_k) \leq 0, \\
					& \ego(x_k) \cap \ob^{(m)}= \emptyset, \\
					\end{array} \right\} 
					\begin{array}{lll} 
						k=0,\ldots,N,\\[0ex]
						m=1,\ldots,M,
					\end{array}
	\end{array}
\end{equation}
where $\ego(x_k)$ is either given by \eqref{eq:transfEgoPointMass} (point-mass model) or \eqref{eq:transfEgoFullSet} (full-dimensional set), $\mathbf x := [x_0,x_1,\ldots,x_{N+1}]$ is the collection of all states, and $\mathbf u := [u_0,u_1,\ldots,u_{N}]$ is the collection of all inputs. 
A key difficulty in solving problem \eqref{eq:MPC_genCollAvoid}, even for linear systems with convex objective function and convex state/input constraints, is the presence of the collision-avoidance constraints $\ego(x_k)\cap\ob^{(m)}=\emptyset$, which in general are non-convex and non-differentiable \cite{GoulartTrajectory2011,SchulmanAbeelColisionChecking2014}. In the following, we present two novel approaches for modeling collision avoidance constraints  that preserve continuity and differentiability, and are amendable for use with existing off-the-shelf gradient- and Hessian-based optimization algorithms.

\subsection{Collision Avoidance}
A popular way of formulating collision avoidance is based on the notion of \emph{signed distance} \cite{SchulmanAbeelColisionChecking2014}
\begin{equation}\label{eq:signedDist}
	\text{sd}(\E(x),\ob) := \text{dist}(\E(x),\ob) - \pen(\E(x),\ob), 
\end{equation}
where $\dist(\cdot,\cdot)$ and $\pen(\cdot,\cdot)$ are the distance and penetration function, and  are defined as
\begin{subequations}
\begin{align}
	\text{dist}(\E(x),\ob) &:= \min_{t}\{\|t\| \colon \left(\E(x)+t \right) \cap \ob \neq \emptyset \}, \label{eq:defDist}\\
	\text{pen}(\E(x),\ob) &:= \min_{t}\{\|t\| \colon \left(\E(x)+t \right) \cap \ob = \emptyset \}\label{eq:defPen}.
\end{align}
\end{subequations}
Roughly speaking, the signed distance is positive if $\ego(x)$ and $\ob$ do not intersect, and negative if they overlap.
Therefore, collision avoidance can be ensured by requiring $\text{sd}(\E(x),\ob)>0$. Unfortunately, directly enforcing $\text{sd}(\E(x),\ob)>0$ inside the optimization problem \eqref{eq:MPC_genCollAvoid} is generally difficult since it is non-convex and non-differentiable in general \cite{SchulmanAbeelColisionChecking2014}. Furthermore, for optimization algorithms to be numerically efficient, they require an explicit representation of the functions they are dealing with, in this case $\text{sd}(\cdot,\cdot)$. This, however, is difficult to obtain in practice since $\text{sd}(\cdot,\cdot)$ itself is the solution of the  optimization problems \eqref{eq:defDist} and \eqref{eq:defPen}.
As a result, existing algorithms approximate \eqref{eq:signedDist} through local linearization \cite{SchulmanAbeelColisionChecking2014}, for which it is difficult to establish bounds on approximation errors. 

In the following, we propose two reformulation techniques for obstacles avoidance that overcome the issues of non-differentiability and that do not require an explicit representation of the signed distance.  We begin with point-mass models in Section~\ref{sec:PointMass}, and treat the general case of full-dimensional controlled objects in Section~\ref{sec:FullDimObj}.

\section{Collision Avoidance for Point-Mass Models}\label{sec:PointMass}
In this section, we first present a smooth reformulation of \eqref{eq:MPC_genCollAvoid} when $\ego(x_k) = p_k$ in Section~\ref{sec:collFree_pointMass}, and then extend the approach in Section~\ref{subsec:min-PenTraj_pointMass} to generate minimum-penetration trajectories in case collisions cannot be avoided. To simplify notation, the time indices $k$ are omitted in the remainder of this section.

\subsection{Collision-Free Trajectory Generation}\label{sec:collFree_pointMass}
\begin{proposition}\label{prop:distDual_pointMass}
	Assume that the obstacle $\ob$ and the controlled object are given as in \eqref{eq:convOb} and \eqref{eq:transfEgoPointMass}, respectively, and let $\mathsf{d}_\textnormal{min} \geq0 $ be a desired safety margin between the controlled object and the obstacle. Then we have:
	\begin{align}\label{eq:distDual_pointMass}
	&\textnormal{dist}(\ego(x), \ob) > \mathsf{d}_\textnormal{min}  ~\\
	  & \qquad \Longleftrightarrow~  \exists \lambda\succeq_{\K^*}0 \colon (A\,p - b)^\top \lambda  > \mathsf{d}_\textnormal{min},~\|A^\top\lambda\|_* \leq 1. \nonumber
	\end{align}
\end{proposition}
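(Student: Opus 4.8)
The plan is to recognize $\dist(\ego(x),\ob)$ as the optimal value of a convex optimization problem, dualize it, and then read off the claimed equivalence from strong duality. First I would unfold the definition \eqref{eq:defDist}: since $\ego(x)=p$ is a single point, $(\ego(x)+t)\cap\ob\neq\emptyset$ is equivalent to $p+t\in\ob$, so substituting $y=p+t$ and using the representation \eqref{eq:convOb} yields
\[
  \dist(\ego(x),\ob)=\min_{y}\bigl\{\,\|y-p\| \;\colon\; A y\preceq_{\K}b\,\bigr\}.
\]
This is convex (convex objective, conic-representable feasible set), and it is feasible with finite value because $\ob$ is nonempty and compact.

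Next I would form the Lagrange dual. Attaching a multiplier $\lambda\succeq_{\K^*}0$ to the conic constraint gives $L(y,\lambda)=\|y-p\|+\lambda^\top(Ay-b)$; with the change of variables $z=y-p$ the inner minimization becomes $\inf_z\{\|z\|+(A^\top\lambda)^\top z\}+(Ap-b)^\top\lambda$. Here I would use the standard fact that the convex conjugate of a norm is the indicator of the unit ball of the dual norm, i.e.\ $\inf_z\{\|z\|+v^\top z\}=0$ when $\|v\|_*\leq1$ and $=-\infty$ otherwise. Hence the dual function equals $(Ap-b)^\top\lambda$ on $\{\lambda\succeq_{\K^*}0:\|A^\top\lambda\|_*\leq1\}$ and $-\infty$ elsewhere, so the dual problem is
\[
  \sup_{\lambda}\bigl\{\,(Ap-b)^\top\lambda \;\colon\; \lambda\succeq_{\K^*}0,\ \|A^\top\lambda\|_*\leq1\,\bigr\}.
\]

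Then I would invoke strong duality for this primal--dual pair, which makes $\dist(\ego(x),\ob)$ equal to the dual supremum; here I would cite the auxiliary duality lemma in the Appendix (or a standard convex-analysis reference), the relevant point being that the primal is convex and feasible and that the stated hypothesis of a nonempty (relative) interior of $\ob$ supplies the Slater-type constraint qualification for conic programs. The conclusion is then immediate: $\dist(\ego(x),\ob)>\mathsf{d}_\textnormal{min}$ holds iff the dual supremum exceeds $\mathsf{d}_\textnormal{min}$, and---because the dual feasible set is nonempty (e.g.\ $\lambda=0$)---that is equivalent to the existence of some dual-feasible $\lambda$ whose objective is strictly above $\mathsf{d}_\textnormal{min}$, which is exactly the right-hand side of \eqref{eq:distDual_pointMass}. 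Note that ``$\Longleftarrow$'' uses only weak duality, so strong duality is needed only for ``$\Longrightarrow$''.

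The step I expect to be the main obstacle is the rigorous justification of strong duality: pinning down the correct constraint qualification under the stated hypothesis on $\ob$, and ensuring there is no duality gap even when the dual optimum fails to be attained. By contrast, the norm-conjugate computation and the elementary observation that a supremum exceeds $\mathsf{d}_\textnormal{min}$ precisely when some feasible point does are routine.
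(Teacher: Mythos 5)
Your proposal is correct and follows essentially the same route as the paper: rewrite $\dist(\ego(x),\ob)$ as the conic program $\min_t\{\|t\|\colon A(p+t)\preceq_{\K}b\}$, pass to its Lagrange dual $\max_\lambda\{(Ap-b)^\top\lambda\colon \lambda\succeq_{\K^*}0,\ \|A^\top\lambda\|_*\leq1\}$, invoke strong duality via the non-empty relative interior of $\ob$, and translate the strict inequality on the optimal value into the existence of a dual-feasible certificate. The only difference is cosmetic: you derive the dual explicitly via the norm-conjugate identity (and note that attainment of the dual optimum is not needed for the strict-inequality equivalence), whereas the paper simply cites the corresponding derivation in Boyd and Vandenberghe.
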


\begin{proof}
 	It follows from \eqref{eq:convOb} and \eqref{eq:defDist} that $\dist(\ego(x),\ob)=\min_{t}\{ \| t \| \colon A(\ego(x) + t) \preceq_\K b\}$. Following \cite[p.401]{boyd2004convex}, its dual problem is given by $\max_{\lambda}\left\{ (A \ego(x) -b)^\top \lambda \colon  \|A^\top \lambda\|_* \leq1,~\lambda \succeq_{\K^*} 0  \right\}$, where $\|\cdot\|_*$ is the dual norm associated to $\|\cdot\|$ and $\K^*$ is the dual cone of $\K$. Since $\ob$ is assumed to have non-empty relative interior, strong duality holds, and $\dist(\ego(x),\ob) = \max_{\lambda}\left\{ (A\, \ego(x) -b)^\top \lambda \colon  \|A^\top \lambda\|_* \leq1,~\lambda \succeq_{\K^*} 0  \right\}$. Hence, for any non-negative scalar $\mathsf{d}_\textnormal{min}$, $\dist(\ego(x),\ob) > \mathsf{d}_\text{min}$ is satisfied if, and only if, there exists $\lambda\succeq_{\K^*}0 \colon (A \, \ego(x) -b)^\top \lambda > \mathsf{d}_\text{min},~ \|A^\top \lambda\|_* \leq1$. The desired result follows from identity \eqref{eq:transfEgoPointMass}. 
\end{proof}

Intuitively speaking, any  variable $\lambda$ satisfying the right-hand-side of \eqref{eq:distDual_pointMass} is a certificate verifying the condition $\dist(\ego(x),\ob) >\mathsf{d}_\text{min}$. Since $\ego(x)\cap \ob = \emptyset$ is equivalent to $\dist(\ego(x),\ob)>0$, the optimal control problem \eqref{eq:MPC_genCollAvoid} for the point-mass model \eqref{eq:transfEgoPointMass} is given by
\begin{equation}\label{eq:MPC_dualDist_pointMass}
	\begin{array}{lll}
		\displaystyle\min_{\mathbf x, \mathbf u,\bm\lambda}  &\displaystyle \sum_{k=0}^{N}  \ell(x_k, u_k)  \\
			\ \text{s.t.} & \! x_0 = x_S,~ x_{N+1} = x_F, \\
					& x_{k+1} = f(x_k,u_k),~h(x_k, u_k) \leq 0, \\
					&(A^{(m)}\, p_k - b^{(m)})^\top \lambda_k^{(m)}  > 0,~\\
					& \|{A^{(m)}}^\top\lambda_k^{(m)}\|_* \leq 1,~ \lambda_k^{(m)}\succeq_{\K^*}0  , \\
					& \textnormal{for  }   k=0,\ldots,N,\ m=1,\ldots,M, 
					
	\end{array}
\end{equation}
where $p_k$ is the position of the controlled object at time $k$, $\lambda_k^{(m)}$ is the dual variable associated with obstacle $\ob^{(m)}$ at time step $k$, and the optimization is performed over the states $\mathbf x$, the inputs $\mathbf u$ and the dual variables $\bm\lambda = [\lambda_0^{(1)},\ldots,\lambda_0^{(m)},\lambda_1^{(1)},\ldots,\lambda_N^{(m)}]$. 
We emphasize that \eqref{eq:MPC_dualDist_pointMass} is an \emph{exact} reformulation of \eqref{eq:MPC_genCollAvoid} and that the optimal trajectory $\mathbf x^* = [x_0^*,x_1^*,\ldots,x_N^*]$ obtained by solving \eqref{eq:MPC_genCollAvoid} is \emph{kinodynamically feasible}.

\begin{remark}\label{rem:smoothness}
    Without further assumptions on the norm $\|\cdot\|$ and the cone $\K$, the last two constraints in \eqref{eq:MPC_dualDist_pointMass} are not guaranteed to be smooth, a property that many general-purpose non-linear optimization algorithms require\footnote{Strictly speaking, these solvers often require the cost function and constraints to be twice continuously differentiable only. Smoothness is assumed in this paper for the sake of simplicity.}. Fortunately, it turns out that these constraints are smooth for the practically relevant cases of $\|\cdot\|$ being the Euclidean distance and $\mathcal K$ either the standard cone or the second-order cone, which allows us to model polyhedral and ellipsoidal obstacles. In these cases, and under the assumption that the functions $f(\cdot,\cdot)$, $h(\cdot,\cdot)$ and $\ell(\cdot,\cdot)$ are smooth, \eqref{eq:MPC_dualDist_pointMass} is a smooth nonlinear optimization problem that is amendable to general-purpose non-linear optimization algorithms such as IPOPT \cite{WachterIPOPT2006}. Without going into details, we point out that smoothness is retained when $\|\cdot\| = \|\cdot\|_p$ is a general $p$-norm, with $p\in(1,\infty)$, and  $\K$ is the cartesian product of $p$-order cones $\K_p := \{(s,z)\colon \|z\|_p \leq s \}$, with $p\in(1,\infty)$. In this case, the dual norm is given by $\|\cdot\|_*=\|\cdot\|_q$ and the dual cone is $(\K_p)^* = \K_q$, where $q$ satisfies $1/p + 1/q = 1$, see \cite{boyd2004convex} for details on dual norms and dual cones.
\end{remark}

While reformulation \eqref{eq:MPC_dualDist_pointMass} can be used for obstacle avoidance, it is limited to finding collision-free trajectories. Indeed, in case collisions cannot be avoided,  the above formulation is not able to find ``least-intrusive" trajectories by softening the constraints. Intuitively speaking, this is because  \eqref{eq:MPC_dualDist_pointMass} is based on the notion of distance, and the distance between two overlapping objects (as is in the case of collision), is always zero, regardless of the penetration. 
From a practical point of view, this implies that slack variables cannot be included in the constraints of \eqref{eq:distDual_pointMass}, because the optimal control problem is not able to distinguish between ``severe" and ``less severe" colliding trajectories. Furthermore, in practice, it is often desirable to soften constraints and include slack variables to ensure feasibility of the (non-convex) optimization problem, since (local) infeasibilities in non-convex optimization problem are known to cause  numerical difficulties. In the following, we show how the above limitations can be overcome by considering the notion of penetration and softening the collision avoidance constraints.

\subsection{Minimum-Penetration Trajectory Generation}\label{subsec:min-PenTraj_pointMass}
In this section, we consider the design of  \emph{minimum-penetration} trajectories for cases when collision cannot be avoided and the goal is to find a ``least-intrusive" trajectory. Following the literature  \cite{CameronCulleyTranslationalDistance1986, Dobkin1993}, we measure ``intrusion" in terms of \emph{penetration} as defined in \eqref{eq:defPen}. 
\begin{proposition}\label{prop:penDual_pointMass}
	Assume that the obstacle $\ob$ and controlled object are given as in \eqref{eq:convOb} and \eqref{eq:transfEgoPointMass}, respectively, and let $\mathsf{p}_\textnormal{max} \geq0 $ be a desired maximum penetration of the controlled object and the obstacle. Then we have:
	\begin{align}\label{eq:penDual_pointMass}
	& \textnormal{pen}(\ego(x), \ob) < \mathsf{p}_\textnormal{max}   \\
	&\qquad \Longleftrightarrow~  \exists \lambda\succeq_{\K^*}0 \colon (b-A\,p )^\top \lambda  < \mathsf{p}_\textnormal{max},~\|A^\top\lambda\|_* = 1. \nonumber
	\end{align}
\end{proposition}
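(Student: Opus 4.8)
The plan is to mirror the proof of Proposition~\ref{prop:distDual_pointMass}, replacing the primal/dual pair for the \emph{distance} by one for the ``inscribed radius''. Combining \eqref{eq:defPen}, \eqref{eq:convOb} and \eqref{eq:transfEgoPointMass}, the penetration of the point mass is $\pen(\ego(x),\ob)=\min_t\{\|t\|: b-A(p+t)\notin\K\}$, i.e.\ the smallest displacement that pushes $p$ out of the convex body $\ob$. First I would record the elementary fact that, since $\ob$ is closed, this equals the radius of the largest closed $\|\cdot\|$-ball about $p$ contained in $\ob$; that is, $\pen(\ego(x),\ob)=\sup\{r\ge0:\ \{q:\|q-p\|\le r\}\subseteq\ob\}$, with the convention that the supremum is $0$ when no such $r$ exists, consistent with $t=0$ being feasible in \eqref{eq:defPen} whenever $p\notin\ob$.

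Next I would turn the containment into an algebraic condition by combining conic duality with norm/dual-norm duality. Using $z\in\K \iff \lambda^\top z\ge 0$ for all $\lambda\in\K^*$ (valid since $\K$ is a closed convex cone) and $\sup_{\|t\|\le r}(A^\top\lambda)^\top t = r\,\|A^\top\lambda\|_*$, one obtains
\[
 \{q:\|q-p\|\le r\}\subseteq\ob \;\iff\; (b-A\,p)^\top\lambda \ge r\,\|A^\top\lambda\|_* \quad\text{for all }\lambda\succeq_{\K^*}0 .
\]
Taking the supremum over $r\ge0$ and using the homogeneity of $\K^*$ to normalize $\|A^\top\lambda\|_*=1$ then yields the closed-form ``certificate'' representation $\pen(\ego(x),\ob)=\max\!\big(0,\ \min\{(b-A\,p)^\top\lambda:\ \lambda\succeq_{\K^*}0,\ \|A^\top\lambda\|_*=1\}\big)$, where compactness and nonempty relative interior of $\ob$ ensure the inner optimum is finite and attained, and the degenerate multipliers with $A^\top\lambda=0$ are harmless because they would force $\lambda^\top b\ge 0$ (otherwise $\ob=\emptyset$). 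The equivalence \eqref{eq:penDual_pointMass} is then read off directly: $\pen(\ego(x),\ob)<\mathsf p_\textnormal{max}$ holds iff the minimum above is $<\mathsf p_\textnormal{max}$, i.e.\ iff some feasible $\lambda$ attains objective value below $\mathsf p_\textnormal{max}$, which is exactly the right-hand side of \eqref{eq:penDual_pointMass}. (On the collision regime $p\in\ob$ that this section targets, $b-A\,p\in\K$ makes the inner minimum nonnegative, so the outer truncation is vacuous and the equivalence holds as stated.)

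The step I expect to be the main obstacle is the middle one. Unlike $\dist$ in Proposition~\ref{prop:distDual_pointMass}, which is the value of a genuine convex program so that strong duality applies immediately, $\pen$ is a minimization over the \emph{non-convex} set $\{t: p+t\notin\ob\}$ and has no off-the-shelf dual; the actual work lies in the reduction to the inscribed-radius problem and in the careful interchange of the suprema over $t$ and over $\lambda$, including attainment of the inner minimum and the treatment of the directions with $A^\top\lambda=0$ — and it is precisely the assumption that $\ob$ is compact with nonempty relative interior that makes these go through. The remaining bookkeeping also explains the cosmetic difference with Proposition~\ref{prop:distDual_pointMass}: the constraint is $\|A^\top\lambda\|_*=1$ rather than $\le 1$, because penetration is governed by the \emph{tightest} supporting halfspace of $\ob$ at $p$, hence by normalized multipliers.
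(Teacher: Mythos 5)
Your proof is correct and arrives at the same dual representation $\pen(\ego(x),\ob)=\inf\{(b-A\,p)^\top\lambda\colon\lambda\succeq_{\K^*}0,\ \|A^\top\lambda\|_*=1\}$ (truncated at zero) as the paper, but by a genuinely different route. The paper writes $\pen(\ego(x),\ob)=\dist(\ego(x),\ob^\complement)$, decomposes $\ob$ as the intersection of its supporting halfspaces (Lemma~\ref{lem:suppHypPlane}), reduces the penetration to $\inf_{\{z\colon\|z\|_*=1\}}\{\max_{y\in\ob}y^\top z-z^\top p\}$, and then dualizes the inner support-function evaluation via conic strong duality before eliminating $z$. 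You instead identify the penetration with the radius of the largest $\|\cdot\|$-ball centered at $p$ contained in $\ob$ and convert that containment into the linear condition $(b-A\,p)^\top\lambda\ge r\,\|A^\top\lambda\|_*$ using only the bipolar identity $\K=\K^{**}$ and the dual-norm formula, with no appeal to strong duality. Your version is more elementary and more careful at the edges: you handle the degenerate multipliers with $A^\top\lambda=0$ explicitly, and you make visible the truncation $\pen=\max(0,\inf\{\cdot\})$, which the paper's chain silently drops --- its intermediate identity $\pen(\ego(x),\ob)=\inf_\lambda\{(b-A\,p)^\top\lambda\colon\cdots\}$ is valid only when $p\in\ob$, and the stated equivalence \eqref{eq:penDual_pointMass} indeed fails in the corner case $\mathsf{p}_\textnormal{max}=0$ with $p$ strictly outside $\ob$, a caveat both arguments inherit but which you at least flag. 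What the paper's route buys in exchange is reusability: the supporting-hyperplane and duality machinery set up there is invoked verbatim to prove the full-dimensional analogue, Proposition~\ref{prop:penDual_fullDimSet}, where the inscribed-ball picture does not carry over as directly.
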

\begin{proof}
	The proof, along with auxiliary lemmas, is given in the Appendix.
\end{proof}
Proposition~\ref{prop:penDual_pointMass} resembles Proposition~\ref{prop:distDual_pointMass} with the  difference that the convex inequality constraint $\|A^\top \lambda\|_* \leq 1$ is replaced with the non-convex equality constraint $\|A^\top \lambda\|_* =1$. In the following, we will see that Propositions~\ref{prop:distDual_pointMass} and \ref{prop:penDual_pointMass} can be combined to represent the \emph{signed distance} as defined in \eqref{eq:signedDist}.
\begin{theorem}\label{thm:signDist_pointMass}
	Assume that the obstacle $\ob$ and the controlled object are given as in \eqref{eq:convOb} and \eqref{eq:transfEgoPointMass}, respectively. Then, for any $\mathsf{d}\in\R$, we have:
	\begin{align}\label{eq:signDist_pointMass}
	& \textnormal{sd}(\ego(x), \ob) > \mathsf{d}  ~\\
	& \qquad \Longleftrightarrow~  \exists \lambda\succeq_{\K^*}0 \colon (A\,p - b)^\top \lambda  > \mathsf{d},~\|A^\top\lambda\|_* = 1. \nonumber
	\end{align}
\end{theorem}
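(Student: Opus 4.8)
The plan is to derive Theorem~\ref{thm:signDist_pointMass} directly from Propositions~\ref{prop:distDual_pointMass} and~\ref{prop:penDual_pointMass}, splitting the analysis on the sign of $\mathsf{d}$. The link between the two propositions and the signed distance is the elementary observation -- immediate from \eqref{eq:defDist}--\eqref{eq:defPen} and part of the auxiliary material -- that for a point $\ego(x)=p$ and a closed obstacle $\ob$ the functions $\dist$ and $\pen$ are never simultaneously positive: if $\ego(x)\cap\ob=\emptyset$ then $\pen(\ego(x),\ob)=0$ and $\sd(\ego(x),\ob)=\dist(\ego(x),\ob)>0$, while if $\ego(x)\cap\ob\neq\emptyset$ then $\dist(\ego(x),\ob)=0$ and $\sd(\ego(x),\ob)=-\pen(\ego(x),\ob)\le 0$. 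Hence $\sd(\ego(x),\ob)>\mathsf{d}$ is equivalent to $\dist(\ego(x),\ob)>\mathsf{d}$ whenever $\mathsf{d}\ge 0$, and to $\pen(\ego(x),\ob)<-\mathsf{d}$ whenever $\mathsf{d}<0$.

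For $\mathsf{d}\ge 0$ I would first check this equivalence: if $\dist>\mathsf{d}\ge 0$ then $\ego(x)\cap\ob=\emptyset$, so $\sd=\dist>\mathsf{d}$; conversely if $\sd>\mathsf{d}\ge 0$ then $\sd>0$, which forces $\ego(x)\cap\ob=\emptyset$ and $\sd=\dist$. Then Proposition~\ref{prop:distDual_pointMass} with $\mathsf{d}_\textnormal{min}=\mathsf{d}$ gives $\dist>\mathsf{d}\iff\exists\lambda\succeq_{\K^*}0$ with $(A\,p-b)^\top\lambda>\mathsf{d}$ and $\|A^\top\lambda\|_*\le 1$. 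What remains is to upgrade $\le 1$ to $=1$. The reverse implication is free, since $\|A^\top\lambda\|_*=1$ trivially implies $\|A^\top\lambda\|_*\le 1$. For the forward implication I would rescale: a certificate $\lambda$ from Proposition~\ref{prop:distDual_pointMass} has $(A\,p-b)^\top\lambda>\mathsf{d}\ge 0$, and $A^\top\lambda\neq 0$ because otherwise $(A\,p-b)^\top\lambda=p^\top(A^\top\lambda)-b^\top\lambda=-b^\top\lambda\le 0$ (non-emptiness of $\ob$ supplies $y_0$ with $b-Ay_0\in\K$, whence $b^\top\lambda\ge y_0^\top A^\top\lambda=0$), contradicting $(A\,p-b)^\top\lambda>0$; therefore $c:=\|A^\top\lambda\|_*\in(0,1]$ and $\lambda/c$ is $\K^*$-nonnegative, satisfies $\|A^\top(\lambda/c)\|_*=1$, and has objective $c^{-1}(A\,p-b)^\top\lambda\ge(A\,p-b)^\top\lambda>\mathsf{d}$.

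For $\mathsf{d}<0$ the argument is shorter. Here $\sd>\mathsf{d}\iff\pen<-\mathsf{d}$: if $\ego(x)\cap\ob=\emptyset$ then $\pen=0<-\mathsf{d}$ and $\sd=\dist>0>\mathsf{d}$; if $\ego(x)\cap\ob\neq\emptyset$ then $\sd=-\pen$, so $\sd>\mathsf{d}\iff\pen<-\mathsf{d}$. Applying Proposition~\ref{prop:penDual_pointMass} with $\mathsf{p}_\textnormal{max}=-\mathsf{d}>0$ yields $\pen<-\mathsf{d}\iff\exists\lambda\succeq_{\K^*}0$ with $(b-A\,p)^\top\lambda<-\mathsf{d}$ and $\|A^\top\lambda\|_*=1$, and $(b-A\,p)^\top\lambda<-\mathsf{d}$ is exactly $(A\,p-b)^\top\lambda>\mathsf{d}$, with the normalization $\|A^\top\lambda\|_*=1$ already in the desired form. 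Combining the two cases gives the claim. I expect the only non-routine point to be the rescaling step for $\mathsf{d}\ge 0$ -- ruling out $A^\top\lambda=0$ and verifying that dividing by $c\le 1$ preserves the strict inequality -- which is precisely where compactness (non-emptiness) of $\ob$ enters; the case $\mathsf{d}<0$ is little more than a reindexing of Proposition~\ref{prop:penDual_pointMass}.
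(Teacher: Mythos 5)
Your proof is correct, and it uses the same two ingredients as the paper (Propositions~\ref{prop:distDual_pointMass} and \ref{prop:penDual_pointMass} plus a rescaling of $\lambda$), but it organizes the case analysis differently: the paper splits on whether $\ego(x)\cap\ob=\emptyset$, whereas you split on the sign of $\mathsf{d}$. Your decomposition has the advantage of matching the hypotheses under which the two propositions are actually stated ($\mathsf{d}_\textnormal{min}\geq0$ and $\mathsf{p}_\textnormal{max}\geq0$), so each proposition is only ever invoked with a nonnegative parameter; the paper's split implicitly invokes \eqref{eq:penDual_pointMass} with $\mathsf{p}_\textnormal{max}=-\mathsf{d}<0$ when $\mathsf{d}>0$ and the sets overlap (a vacuous case, since both sides of \eqref{eq:signDist_pointMass} are then false because $(A\,p-b)^\top\lambda\leq0$ for $p\in\ob$ and $\lambda\succeq_{\K^*}0$, but one the paper does not comment on). You also make explicit the step the paper compresses into ``due to homogeneity'': ruling out $A^\top\lambda=0$ via $b^\top\lambda\geq y_0^\top A^\top\lambda$ for a point $y_0\in\ob$, and checking that dividing by $c=\|A^\top\lambda\|_*\in(0,1]$ preserves the strict inequality because the certified value is positive. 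The paper's version is shorter; yours is tighter at the seams. Both are valid.
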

\begin{proof}
	By definition, $\sd(\ego(x),\ob) = \dist(\ego(x),\ob)$ if $\ego(x) \cap \ob = \emptyset$, and $\sd(\ego(x),\ob) = -\pen(\ego(x),\ob)$ if $\ego(x) \cap \ob \neq \emptyset$. Let $\ego(x) \cap \ob \neq \emptyset$, in which case \eqref{eq:signDist_pointMass} follows directly from \eqref{eq:penDual_pointMass}. If $\ego(x) \cap \ob = \emptyset$, then we have from \eqref{eq:distDual_pointMass} that $\sd(\ego(x),\ob) > \mathsf{d}$ is equivalent to $\exists \lambda\succeq_{\K^*}0 \colon (A\,p - b)^\top \lambda  > \mathsf{d},~\|A^\top\lambda\|_* \leq 1$. Due to homogeneity with respect to $\lambda$, if the previous condition is satisfied, then there always exists a (scaled) dual multiplier $\lambda'\succeq_{\K^*}0$ such that $(A\,p - b)^\top \lambda'  > \mathsf{d},~\|A^\top\lambda'\|_* = 1$. This concludes the proof.
\end{proof}
Reformulation \eqref{eq:signDist_pointMass} is similar to reformulation \eqref{eq:distDual_pointMass}, with the difference that \eqref{eq:signDist_pointMass} holds for all $\mathsf{d}\in\R$, while \eqref{eq:distDual_pointMass} only holds for $\mathsf d\geq0$. The ``price" we pay for this generalization is that the convex constraint $\|A^\top \lambda\|_* \leq 1$ is turned into the non-convex equality constraint $\|A^\top \lambda\|_* = 1$ which, as we will see later on, generally results in  longer computation times. Nevertheless, Theorem~\ref{thm:signDist_pointMass} allows us to compute trajectories of least penetration whenever collision cannot be avoided by solving the following soft-constrained \emph{minimum-penetration} problem:
\begin{equation}\label{eq:MPC_genCollAvoid_pointMass}
	\begin{array}{lll}
		\displaystyle\min_{\mathbf x, \mathbf u,\mathbf s, \bm\lambda}  &\displaystyle \sum_{k=0}^{N} \left[ \ell(x_k, u_k) + \kappa\cdot\sum_{m=1}^M s_{k}^{(m)}  \right]  \\
			\ \ \text{s.t.} & x_0 = x(0),~ x_{N+1} = x_F, \\
										&x_{k+1} = f(x_k,u_k),~h(x_k, u_k) \leq 0, \\
					&(A^{(m)}\,p_k - b^{(m)})^\top \lambda_k^{(m)}  > -s_{k}^{(m)},~\\
					& \|{A^{(m)}}^\top\lambda_k^{(m)}\|_* = 1,~ \\ &  s_k^{(m)}\geq0,~\lambda_k^{(m)}\succeq_{\K^*}0, \\  
					& \text{for } k=0,\ldots,N,\ m=1,\ldots,M,
	\end{array}
\end{equation}
where $p_k$ is the position of the controlled object at time $k$, $s_k^{(m)} \in \R_+$ is the slack variable associated to the object $\ob^{(m)}$ at time step $k$, and $\kappa\geq0$ is a weight factor that keeps the slack variable as close to zero as possible.
Without going into details, we point out that the weight $\kappa$ should be chosen ``big enough" such that the slack variables only become active when the original problem is infeasible, i.e., when obstacle avoidance is not possible \cite{borrelli2017predictive}. 
Notice that a positive slack variable implies a colliding trajectory, where the penetration depth is given by $s_k^{(m)}$.
We close this section by pointing out that if, a priori, it is known that a collision-free trajectory can be generated, then formulation \eqref{eq:MPC_dualDist_pointMass} should be given preference over formulation \eqref{eq:MPC_genCollAvoid_pointMass} because the former has fewer decision variables, and because the constraint $\|{A^{(m)}}^\top \lambda_k^{(m)}\|_* \leq 1$ is convex, which generally leads to improved solution times. 
Smoothness of \eqref{eq:MPC_genCollAvoid_pointMass} is ensured if $\|\cdot\|$ is the Euclidean distance, and $\K$ is either the standard cone or the second-order cone, see
Remark~\ref{rem:smoothness} for details.

\section{Collision Avoidance for Full-Dimension Controlled Objects}\label{sec:FullDimObj}
The previous section provided a framework for computing collision-free and minimum-penetration trajectories for controlled objects that are described by point-mass models.
While such models can be used to generate trajectories for ``ball-shaped" controlled objects, done by setting the minimum distance $d_{\min}$ equal to the radius of the controlled object (see Section~\ref{sec:ex_quadcopter} for such an example), it can be restrictive in other cases. 
For example, modeling a car in a parking lot as a Euclidean ball can be very conservative, and  prevent the car from finding a parking spot. To alleviate this issue, we show in this section how the results of Section~\ref{sec:PointMass} can be extended to full-dimensional controlled objects.

\subsection{Collision-Free Trajectory Generation}
Similar to Section~\ref{sec:PointMass}, we begin by first reformulating the distance function, which will allow us to generate collision-free trajectories:
\begin{proposition}\label{prop:distDual_fullDimSet}
	Assume that the controlled object and the obstacle are given as in \eqref{eq:transfEgoFullSet} and \eqref{eq:convOb}, respectively, and let $\mathsf{d}_\textnormal{min} \geq0 $ be a desired safety margin. Then we have:
	\ifArXiv
			\begin{align}\label{eq:distDual_fullDimSet}
		& \textnormal{dist}(\ego(x), \ob) > \mathsf{d}_\textnormal{min} \\ 
		& \quad \Leftrightarrow~  \exists \lambda\succeq_{\K^*}0, \mu\succeq_{\bar\K^*}0 \colon -g^\top \mu +(At(x) - b)^\top \lambda  > \mathsf{d}_\textnormal{min} ,\ G^\top \mu + R(x)^\top A^\top \lambda = 0,~\|A^\top\lambda\|_* \leq 1. \nonumber
	\end{align}
\else
		\begin{align}\label{eq:distDual_fullDimSet}
		& \textnormal{dist}(\ego(x), \ob) > \mathsf{d}_\textnormal{min} \\ 
		& \quad \Leftrightarrow~  \exists \lambda\succeq_{\K^*}0, \mu\succeq_{\bar\K^*}0 \colon -g^\top \mu +(At(x) - b)^\top \lambda  > \mathsf{d}_\textnormal{min} , \nonumber \\
		& \ \ \ \qquad G^\top \mu + R(x)^\top A^\top \lambda = 0,~\|A^\top\lambda\|_* \leq 1. \nonumber
	\end{align}
\fi
\end{proposition}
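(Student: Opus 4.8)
The plan is to follow the proof of Proposition~\ref{prop:distDual_pointMass} almost verbatim, the only change being that the single point $p$ is replaced by the transformed set $\ego(x)=R(x)\B+t(x)$, which contributes a second block of dual variables $\mu$ for the description of $\B$. First I would turn \eqref{eq:defDist} into an explicit convex program: by \eqref{eq:transfEgoFullSet} and \eqref{eq:convOb}, the condition $(\ego(x)+t)\cap\ob\neq\emptyset$ holds if and only if there is a $z\in\R^n$ with $Gz\preceq_{\bar\K}g$ and $A(R(x)z+t(x)+t)\preceq_{\K}b$, so that
\[
	\dist(\ego(x),\ob)=\min_{t,z}\bigl\{\,\|t\|\ \colon\ A\bigl(R(x)z+t(x)+t\bigr)\preceq_{\K}b,\ Gz\preceq_{\bar\K}g\,\bigr\}.
\]
Since $x$ is fixed, $R(x)$ and $t(x)$ are constant, so this is a convex conic program in $(t,z)$; it is feasible and has finite optimal value because $\ego(x)$ and $\ob$ are nonempty and compact.

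Next I would dualize, as in \cite[p.401]{boyd2004convex}. Assigning multipliers $\lambda\succeq_{\K^*}0$ to the obstacle constraint and $\mu\succeq_{\bar\K^*}0$ to the $\B$-constraint, the Lagrangian is $\|t\|+\lambda^\top\bigl(A(R(x)z+t(x)+t)-b\bigr)+\mu^\top(Gz-g)$. Minimizing over $t$ gives $\inf_t\bigl(\|t\|+(A^\top\lambda)^\top t\bigr)$, which equals $0$ when $\|A^\top\lambda\|_*\leq1$ and $-\infty$ otherwise; minimizing over the free variable $z$ forces its linear coefficient $G^\top\mu+R(x)^\top A^\top\lambda$ to vanish; and the leftover terms are $-g^\top\mu+(At(x)-b)^\top\lambda$. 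Hence the dual problem is
\[
	\max_{\lambda,\mu}\ \bigl\{\,{-g^\top\mu+(At(x)-b)^\top\lambda}\ \colon\ G^\top\mu+R(x)^\top A^\top\lambda=0,\ \|A^\top\lambda\|_*\leq1,\ \lambda\succeq_{\K^*}0,\ \mu\succeq_{\bar\K^*}0\,\bigr\}.
\]
Weak duality already yields the ``$\Leftarrow$'' direction of \eqref{eq:distDual_fullDimSet}; for ``$\Rightarrow$'' I would invoke strong duality, which holds here by the same Slater-type argument as in Proposition~\ref{prop:distDual_pointMass} (the relative interiors of $\ob$ and $\B$ are nonempty), so that the dual optimum is attained and equals $\dist(\ego(x),\ob)$. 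Then $\dist(\ego(x),\ob)>\mathsf{d}_\textnormal{min}$ if and only if the dual maximum exceeds $\mathsf{d}_\textnormal{min}$, i.e.\ if and only if some dual-feasible $(\lambda,\mu)$ has objective value $>\mathsf{d}_\textnormal{min}$, which is precisely the right-hand side of \eqref{eq:distDual_fullDimSet}.

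The routine bookkeeping aside, the one place requiring care is the strong-duality step: one must confirm the conic constraint qualification (a strictly feasible primal point, obtained from the nonempty relative interiors of the two sets) together with the finiteness and attainment of the primal minimum, exactly as in the point-mass case. The only genuinely new computation is tracking how the rotation enters: the $z$-minimization produces the equality constraint with $R(x)^\top A^\top\lambda$ (and not $R(x)A^\top\lambda$), which is where the specific structure $\ego(x)=R(x)\B+t(x)$ of \eqref{eq:transfEgoFullSet} is used; everything else is identical to the proof of Proposition~\ref{prop:distDual_pointMass}.
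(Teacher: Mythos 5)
Your proposal is correct and follows essentially the same route as the paper: write $\dist(\ego(x),\ob)$ as a convex conic program using the representations \eqref{eq:transfEgoFullSet} and \eqref{eq:convOb}, dualize to obtain the maximization over $(\lambda,\mu)$ with the constraints $G^\top\mu+R(x)^\top A^\top\lambda=0$ and $\|A^\top\lambda\|_*\leq1$, and invoke strong duality via the nonempty relative interiors of $\ob$ and $\B$. The only cosmetic difference is that you parameterize the primal by the translation $t$ and a point $z\in\B$ while the paper minimizes $\|e-o\|$ over $e\in\ego(x)$, $o\in\ob$ and cites \cite[Section 8.2]{boyd2004convex} for the dual you derive explicitly; these are equivalent.
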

\begin{proof}
	Recall that $\dist(\ego(x),\ob) = \min_{e,o}\{ \| e - o \| \colon Ao \preceq_\K b, e\in\ego(x)  \} = \min_{e',o}\{ \| R(x)e' + t(x) - o \| \colon Ao \preceq_\K b, Ge' \preceq_{\bar\K} g  \}$, where the last equality follows from \eqref{eq:transfEgoFullSet}. The dual of this minimization problem is given by $\max_{\lambda,\mu} \{ -g^\top \mu +(At(x) - b)^\top \lambda \colon  G^\top \mu + R(x)^\top A^\top \lambda = 0,~\|A^\top\lambda\|_* \leq 1,~ \lambda\succeq_{\K^*}0,~\mu\succeq_{\bar\K^\star}0  \}$, see e.g., \cite[Section 8.2]{boyd2004convex} for the derivation, where $\|\cdot\|_*$ is the dual norm, and $\K^*$ and $\bar\K^\star$ are the dual cones of $\K$ and $\bar\K$, respectively. Since $\ob$ and $\B$ are assumed to have non-empty relative interior, strong duality holds, and $\dist(\ego(x),\ob) > \mathsf{d}_{\min} ~ \Leftrightarrow ~ \max_{\lambda,\mu} \{ -g^\top \mu +(At(x) - b)^\top \lambda \colon  G^\top \mu + R(x)^\top A^\top \lambda = 0,~\|A^\top\lambda\|_* \leq 1,~ \lambda\succeq_{\K^*}0,~\mu\succeq_{\bar\K^\star}0  \} > \mathsf{d}_{\min}~\Leftrightarrow \exists \lambda\succeq_{\K^*}0,\mu\succeq_{\bar\K^*}0 \colon -g^\top \mu +(At(x) - b)^\top \lambda  > \mathsf{d}_{\min},~G^\top \mu + R(x)^\top A^\top \lambda = 0,~ \|A^\top \lambda\|_* \leq1$.
\end{proof}
Compared to Proposition~\ref{prop:distDual_pointMass}, we see that full-dimensional controlled objects require the introduction of the additional dual variables $\mu^{(m)}$, one for each obstacle $\ob^{(m)}$. By setting $\mathsf{d}_\textnormal{min}=0$, we obtain now the following reformulation of \eqref{eq:MPC_genCollAvoid} for full-dimensional objects:
\begin{equation}\label{eq:MPC_dualDist_fullSet}
	\begin{array}{lll}
		 \displaystyle\min_{\mathbf x, \mathbf u,\bm\lambda, \bm\mu}  &\displaystyle \sum_{k=0}^{N} \ell(x_k, u_k)   \\
			\ \ \text{s.t.} & x_0 = x(0),~ x_{N+1} = x_F, \\
							& x_{k+1} = f(x_k,u_k),~h(x_k, u_k) \leq 0, \\
					&-g^\top\mu_k^{(m)}+(A^{(m)}\,t(x_k) - b^{(m)})^\top \lambda_k^{(m)}  > 0,\\
					& G^\top \mu_k^{(m)} + R(x_k)^\top {A^{(m)}}^\top\lambda_k^{(m)} =0, \\
					& \|{A^{(m)}}^\top\lambda_k^{(m)}\|_* \leq 1, ~\lambda_k^{(m)}\succeq_{\K^*}0  ,~ \mu_k^{(m)}\succeq_{\bar\K^*}0, \\
					& \text{for } k=0,\ldots,N,\ m=1,\ldots,M,
	\end{array}
\end{equation}
where $\lambda_k^{(m)}$ and $\mu_k^{(m)}$ are the dual variables associated with the obstacle $\ob^{(m)}$ at step $k$, $\bm\lambda$ and $\bm\mu$ are the collection of all $\lambda_k^{(m)}$ and $\mu_k^{(m)}$, respectively, and the optimization is performed over $(\mathbf x, \mathbf u, \bm\lambda,\bm\mu)$. Notice that \eqref{eq:MPC_dualDist_fullSet} is an \emph{exact} reformulation of \eqref{eq:MPC_genCollAvoid}. Smoothness of \eqref{eq:MPC_dualDist_fullSet} is ensured if $\|\cdot\|$ is the Euclidean distance, and $\K$ and $\bar{\K}$ are either the standard cone or the second-order cone, see also
Remark~\ref{rem:smoothness} for details.

Similar to the point-mass case in Section~\ref{sec:collFree_pointMass}, the optimal control problem~\eqref{eq:MPC_dualDist_fullSet} is able to generate collision-free trajectories, but  unable to find ``least-intrusive" trajectories in case collision-free trajectories do not exist. This limitation is addressed next.

\subsection{Minimum-Penetration Trajectory Generation}
We overcome the above limitation by considering again the notion of penetration. We begin with the following result:
\begin{proposition}\label{prop:penDual_fullDimSet}
	Assume that the obstacles and controlled object  are given as in \eqref{eq:convOb} and \eqref{eq:transfEgoFullSet}, respectively, and let $\mathsf{p}_\textnormal{max} \geq0 $ be a maximal penetration depth. Then we have:
\ifArXiv
	\begin{align}\label{eq:penDual_fullDimSet}
	& \textnormal{pen}(\ego(x), \ob) < \mathsf{p}_\textnormal{max}  \\ 
	& \quad \Longleftrightarrow~  \exists \lambda\succeq_{\K^*}0,\mu\succeq_{\bar\K^*}0 \colon g^\top \mu +(b-A\,t(x))^\top \lambda  < \mathsf{p}_\textnormal{max} ,\  G^\top \mu + R(x)^\top A^\top \lambda = 0,~\|A^\top\lambda\|_* = 1. \nonumber
	\end{align}

\else
		\begin{align}\label{eq:penDual_fullDimSet}
	& \textnormal{pen}(\ego(x), \ob) < \mathsf{p}_\textnormal{max}  \\ 
	& \quad \Longleftrightarrow~  \exists \lambda\succeq_{\K^*}0,\mu\succeq_{\bar\K^*}0 \colon g^\top \mu +(b-A\,t(x))^\top \lambda  < \mathsf{p}_\textnormal{max} ,\nonumber \\
	& \ \ \quad \qquad G^\top \mu + R(x)^\top A^\top \lambda = 0,~\|A^\top\lambda\|_* = 1. \nonumber
	\end{align}
\fi	
\end{proposition}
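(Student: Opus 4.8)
The plan is to reproduce the structure of the proof of Proposition~\ref{prop:penDual_pointMass}, now carrying an extra block of dual variables $\mu$ for the controlled object's shape data $(G,g,\bar\K)$. The key geometric fact is that two compact convex sets are disjoint exactly when some hyperplane strictly separates them, so penetration is the smallest translation restoring such a separation. Concretely, I would set $C(x) := \ob - \ego(x) = \{\,o-e : o\in\ob,\ e\in\ego(x)\,\}$, which is compact and convex (a Minkowski sum), and observe that $(\ego(x)+t)\cap\ob=\emptyset \iff t\notin C(x) \iff \exists\, a\neq 0:\ a^\top t > h_{C(x)}(a)$, with $h_{C(x)}(\cdot)$ the support function of $C(x)$. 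Minimizing $\|t\|$ over $t$ satisfying this separation inequality for a fixed $a$ gives $\max\{0,\,h_{C(x)}(a)/\|a\|_*\}$, and then minimizing over $a$ yields
\[
\pen(\ego(x),\ob) \;=\; \min_{\|a\|_*=1} h_{C(x)}(a),
\]
which is valid in the relevant case where $\ego(x)$ and $\ob$ overlap, so that $0\in\textnormal{int}\,C(x)$; the non-overlapping and degenerate configurations would be handled exactly as in the Appendix proof of Proposition~\ref{prop:penDual_pointMass}.

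Next I would make $h_{C(x)}$ explicit by conic duality. Additivity of support functions together with $\ego(x)=R(x)\B+t(x)$ give $h_{C(x)}(a) = h_\ob(a) + h_\B(-R(x)^\top a) - a^\top t(x)$. Here $h_\ob(a)=\max\{a^\top o : Ao\preceq_\K b\}$ has conic dual $\min\{b^\top\lambda : A^\top\lambda=a,\ \lambda\succeq_{\K^*}0\}$, and $h_\B(-R(x)^\top a)=\max\{-(R(x)^\top a)^\top y : Gy\preceq_{\bar\K}g\}$ has dual $\min\{g^\top\mu : G^\top\mu = -R(x)^\top a,\ \mu\succeq_{\bar\K^*}0\}$; strong duality holds in both because $\ob$ and $\B$ have non-empty relative interior. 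Substituting these two dual programs into the expression for $\pen$, eliminating $a$ against the dual constraint $A^\top\lambda = a$ (so that $G^\top\mu=-R(x)^\top a$ becomes $G^\top\mu+R(x)^\top A^\top\lambda=0$), and collapsing the nested minimizations into one joint minimization over $(\lambda,\mu)$ gives, using $(A^\top\lambda)^\top t(x)=\lambda^\top A\,t(x)$,
\[
\pen(\ego(x),\ob) = \min\big\{\, g^\top\mu + (b-A\,t(x))^\top\lambda \,:\, G^\top\mu + R(x)^\top A^\top\lambda = 0,\ \|A^\top\lambda\|_*=1,\ \lambda\succeq_{\K^*}0,\ \mu\succeq_{\bar\K^*}0 \,\big\}.
\]
The stated equivalence is then immediate: $\pen(\ego(x),\ob)<\mathsf{p}_\textnormal{max}$ holds iff the value of this program is $<\mathsf{p}_\textnormal{max}$, iff some feasible $(\lambda,\mu)$ achieves an objective value below $\mathsf{p}_\textnormal{max}$; conveniently, attainment of the minimum is not needed for this last step, only that $\pen$ equals the program's infimum.

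I expect the main obstacle to be the bookkeeping around the non-convex normalization $\|A^\top\lambda\|_*=1$ and the boundary cases — presumably the reason Proposition~\ref{prop:penDual_pointMass} is relegated to the Appendix with auxiliary lemmas. One must check that the constraint $\|A^\top\lambda\|_*=1$ is satisfiable over $\lambda\succeq_{\K^*}0$ (using compactness of $\ob$, i.e.\ that $\{A^\top\lambda : \lambda\succeq_{\K^*}0\}=\R^n$), that restricting the separating direction to $\|a\|_*=1$ discards nothing (homogeneity of the separation inequality in $a$), and — the genuinely delicate point — that the non-colliding configuration, where $\pen=0$ while the displayed program may return a non-positive value, is treated on its own (the equivalence is then exact for $\mathsf{p}_\textnormal{max}>0$, and for $\mathsf{p}_\textnormal{max}=0$ one falls back on the distance reformulation of Proposition~\ref{prop:distDual_fullDimSet}). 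Since the only structural change from the point-mass setting is the extra conic block $(G,g,\bar\K)$ produced by $\ego(x)=R(x)\B+t(x)$, I would reuse the Appendix lemmas established for Proposition~\ref{prop:penDual_pointMass} — they concern precisely the support-function and conic-duality manipulations above — with $\ego(x)$ in place of the point mass; an entirely equivalent route, more in the paper's own style, is to write $\pen(\ego(x),\ob)=\min_t\{\|t\| : \dist(\ego(x)+t,\ob)>0\}$, insert the dual characterization of $\dist(\ego(x)+t,\ob)>0$ from Proposition~\ref{prop:distDual_fullDimSet}, minimize over $t$ via the dual-norm identity, and rescale the multipliers from $\|A^\top\lambda\|_*\leq 1$ to $\|A^\top\lambda\|_*=1$.
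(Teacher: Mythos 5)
Your proposal is correct and follows essentially the same route as the paper's proof: reduce to $\pen(0,\ob-\ego(x))$, express the penetration as $\inf_{\|z\|_*=1}$ of the support function of the Minkowski difference, dualize the inner maximization over $\ob$ and $\B$ by conic duality, and eliminate $z$ via $A^\top\lambda=z$. The only cosmetic difference is that you split the support function of $\ob-\ego(x)$ additively and dualize each piece separately, whereas the paper dualizes the joint maximization over $(o,e')$ in one step; your explicit attention to the normalization $\|A^\top\lambda\|_*=1$ and the boundary cases is exactly what the paper defers to the Appendix proof of Proposition~\ref{prop:penDual_pointMass}.
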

\begin{proof}
	It follows from \cite{CameronCulleyTranslationalDistance1986} that $\pen(\ego(x),\ob) = \pen(0, \ob - \ego(x))$, where $ \ob - \ego(x) := \{o-e \colon o\in\ob,~ e\in\ego(x)\}$ is the Minkowski difference. Furthermore, we have from the proof of Proposition~\ref{prop:penDual_pointMass} that $\pen(0,\ob-\ego(x)) = \inf_{\{z\colon \|z\|_*=1\}}\{ \max_{y\in\ob-\ego(x)}\{y^\top z\}  \}$. Using strong duality of convex optimization, we can dualize the inner maximization problem as $\max_{o\in\ob,e\in\ego(x)}\{z^\top(o-e)\} = \max_{o\in\ob,e'\in\B}\{z^\top(o-R(x)e'-t(x))  \} = \min_{\lambda,\mu}\{b^\top\lambda + g^\top\mu - z^\top t(x) \colon A^\top \lambda = z,~G^\top \mu = -R^\top z \colon \lambda\succeq_{K^*}0,~\mu\succeq_{\bar\K^*} 0 \}$. Hence,  $\pen(0, \ob - \ego(x)) = \inf_{z,\lambda,\mu} \{  b^\top\lambda + g^\top \mu - z^\top t(x) \colon A^\top\lambda=z,~G^\top\mu=-R^\top z ,~\|z\|_*=1   \}$. Eliminating the $z$-variable using the first equality constraint and following the steps of the proof of Proposition~\ref{prop:penDual_pointMass} gives the desired result.
\end{proof}
The following theorem shows that Propositions~\ref{prop:distDual_fullDimSet} and \ref{prop:penDual_fullDimSet} can be combined to represent the \emph{signed distance} function.
\begin{theorem}\label{thm:signDist_fullDimSet}
	Assume that the obstacles and controlled object are given as in \eqref{eq:convOb} and \eqref{eq:transfEgoFullSet}, respectively. Then, for any $\mathsf d\in\R$, we have:
\ifArXiv
	\begin{align}\label{eq:signDist_fullDimSet}
	& \textnormal{sd}(\ego(x), \ob) > \mathsf{d} \\ 
	& \quad \Longleftrightarrow~  \exists \lambda\succeq_{\K^*}0,\mu\succeq_{\bar\K^*}0 \colon -g^\top \mu +(At(x) - b)^\top \lambda  > \mathsf{d} ,\ G^\top \mu + R(x)^\top A^\top \lambda = 0,~\|A^\top\lambda\|_* = 1 . \nonumber
	\end{align}
\else
		\begin{align}\label{eq:signDist_fullDimSet}
		& \textnormal{sd}(\ego(x), \ob) > \mathsf{d} \\ 
		& \quad \Longleftrightarrow~  \exists \lambda\succeq_{\K^*}0,\mu\succeq_{\bar\K^*}0 \colon -g^\top \mu +(At(x) - b)^\top \lambda  > \mathsf{d} ,\nonumber \\
		& \ \ \qquad \qquad G^\top \mu + R(x)^\top A^\top \lambda = 0,~\|A^\top\lambda\|_* = 1 . \nonumber
	\end{align}
\fi
\end{theorem}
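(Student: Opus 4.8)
The plan is to follow the same route as the proof of Theorem~\ref{thm:signDist_pointMass}: use the definitional split $\sd(\ego(x),\ob)=\dist(\ego(x),\ob)$ when $\ego(x)\cap\ob=\emptyset$ and $\sd(\ego(x),\ob)=-\pen(\ego(x),\ob)$ when $\ego(x)\cap\ob\neq\emptyset$, feed each branch into Proposition~\ref{prop:distDual_fullDimSet} or Proposition~\ref{prop:penDual_fullDimSet}, and then reconcile the only mismatch between \eqref{eq:signDist_fullDimSet} and \eqref{eq:distDual_fullDimSet}/\eqref{eq:penDual_fullDimSet}, namely that the convex inequality $\|A^\top\lambda\|_*\le 1$ of the distance reformulation must become the equality $\|A^\top\lambda\|_*=1$. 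I would, however, organize the case analysis around the sign of $\mathsf d$ rather than around intersection, because Proposition~\ref{prop:distDual_fullDimSet} is stated only for $\mathsf d_\textnormal{min}\ge 0$ and Proposition~\ref{prop:penDual_fullDimSet} only for $\mathsf p_\textnormal{max}\ge 0$, so each covers a different half-line of values of $\mathsf d$.

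\emph{Case $\mathsf d<0$.} Then $-\mathsf d>0$, so Proposition~\ref{prop:penDual_fullDimSet} with $\mathsf p_\textnormal{max}=-\mathsf d$ gives that $\pen(\ego(x),\ob)<-\mathsf d$ is equivalent to the existence of $\lambda\succeq_{\K^*}0$, $\mu\succeq_{\bar\K^*}0$ with $g^\top\mu+(b-At(x))^\top\lambda<-\mathsf d$, $G^\top\mu+R(x)^\top A^\top\lambda=0$ and $\|A^\top\lambda\|_*=1$; moving $-\mathsf d$ to the other side rewrites the first inequality as $-g^\top\mu+(At(x)-b)^\top\lambda>\mathsf d$, i.e., exactly the right-hand side of \eqref{eq:signDist_fullDimSet}. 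It then suffices to check $\sd(\ego(x),\ob)>\mathsf d\Leftrightarrow\pen(\ego(x),\ob)<-\mathsf d$: when the objects overlap this is immediate from $\sd=-\pen$, and when they are disjoint both sides hold trivially, since then $\sd=\dist>0>\mathsf d$ and $\pen=0<-\mathsf d$.

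\emph{Case $\mathsf d\ge 0$.} Here $\sd(\ego(x),\ob)>\mathsf d$ already forces $\sd(\ego(x),\ob)>0$, hence $\ego(x)\cap\ob=\emptyset$ and $\sd(\ego(x),\ob)=\dist(\ego(x),\ob)$; conversely $\dist(\ego(x),\ob)>\mathsf d\ge 0$ forces disjointness and $\sd=\dist$. Thus $\sd(\ego(x),\ob)>\mathsf d\Leftrightarrow\dist(\ego(x),\ob)>\mathsf d$, and Proposition~\ref{prop:distDual_fullDimSet} (applicable since $\mathsf d_\textnormal{min}=\mathsf d\ge 0$) turns this into the existence of $\lambda\succeq_{\K^*}0$, $\mu\succeq_{\bar\K^*}0$ with $-g^\top\mu+(At(x)-b)^\top\lambda>\mathsf d$, $G^\top\mu+R(x)^\top A^\top\lambda=0$ and $\|A^\top\lambda\|_*\le 1$. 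This differs from the target only in the last constraint. One direction is free: any $(\lambda,\mu)$ with $\|A^\top\lambda\|_*=1$ trivially has $\|A^\top\lambda\|_*\le 1$, and it is feasible for the distance dual, so it certifies $\dist>\mathsf d$. For the other direction I would use homogeneity: the map $(\lambda,\mu)\mapsto(c\lambda,c\mu)$, $c>0$, preserves both cone memberships and the homogeneous equation $G^\top\mu+R(x)^\top A^\top\lambda=0$, and scales both $-g^\top\mu+(At(x)-b)^\top\lambda$ and $\|A^\top\lambda\|_*$ by $c$; taking $c=1/\|A^\top\lambda\|_*\ge 1$ produces a feasible point with $\|A^\top\lambda\|_*=1$ whose dual objective, being $c\ge 1$ times a quantity already exceeding $\mathsf d\ge 0$, still exceeds $\mathsf d$.

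The step I expect to require the most care is justifying that $c=1/\|A^\top\lambda\|_*$ is well defined, i.e., that the point supplied by Proposition~\ref{prop:distDual_fullDimSet} has $A^\top\lambda\neq 0$. This is where compactness/nonemptiness of $\ob$ and $\B$ re-enters: for dual-feasible $\lambda\succeq_{\K^*}0$, $\mu\succeq_{\bar\K^*}0$ one has $b^\top\lambda\ge 0$ and $g^\top\mu\ge 0$ (pick any $y_\ob\in\ob$, $y_\B\in\B$ and use $\lambda^\top(b-Ay_\ob)\ge 0$, $\mu^\top(g-Gy_\B)\ge 0$ together with $G^\top\mu=-R(x)^\top A^\top\lambda$), so if $A^\top\lambda=0$ the dual objective equals $-g^\top\mu-b^\top\lambda\le 0$, contradicting that it is $>\mathsf d\ge 0$. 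The only other thing to watch is the sign bookkeeping for $\mathsf d$ described above; beyond these two points the argument is just the algebraic rearrangements already indicated.
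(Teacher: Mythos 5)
Your proof is correct, and it follows the same basic skeleton as the paper's: split into cases, invoke Proposition~\ref{prop:distDual_fullDimSet} on one branch and Proposition~\ref{prop:penDual_fullDimSet} on the other, and use homogeneity of $(\lambda,\mu)$ to pass from $\|A^\top\lambda\|_*\le 1$ to $\|A^\top\lambda\|_*=1$. The one genuine difference is the case decomposition: the paper splits on whether $\ego(x)\cap\ob$ is empty, whereas you split on the sign of $\mathsf d$. Your choice is the more careful one. The paper's split applies Proposition~\ref{prop:penDual_fullDimSet} with threshold $-\mathsf d$ even when $\mathsf d>0$, and Proposition~\ref{prop:distDual_fullDimSet} with threshold $\mathsf d$ even when $\mathsf d<0$, although both propositions are only stated for nonnegative thresholds; moreover, in the disjoint branch with $\mathsf d<0$ the paper's rescaling step is delicate, since multiplying a possibly negative dual objective by $c=1/\|A^\top\lambda\|_*\ge 1$ could push it below $\mathsf d$. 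Your organization sidesteps both issues: for $\mathsf d<0$ you route everything through the penetration proposition (checking that both sides of $\sd>\mathsf d\Leftrightarrow\pen<-\mathsf d$ hold trivially in the disjoint subcase), and for $\mathsf d\ge 0$ the dual objective exceeds $\mathsf d\ge 0$, so scaling up by $c\ge 1$ keeps it above $\mathsf d$. You also supply a detail the paper omits, namely that $A^\top\lambda\neq 0$ so the normalization is well defined; your argument for this (via $b^\top\lambda\ge 0$, $g^\top\mu\ge 0$ and the equality constraint) is sound. In short: same machinery, but your bookkeeping closes small gaps that the paper's terser proof leaves implicit.
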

\begin{proof}
	By definition, $\sd(\ego(x),\ob) = \dist(\ego(x),\ob)$ if $\ego(x) \cap \ob = \emptyset$, and $\sd(\ego(x),\ob) = -\pen(\ego(x),\ob)$ if $\ego(x) \cap \ob \neq \emptyset$. Consider now $\ego(x) \cap \ob \neq \emptyset$, in which case \eqref{eq:signDist_fullDimSet} follows directly from \eqref{eq:penDual_fullDimSet}. Assume now that $\ego(x) \cap \ob = \emptyset$; then we have from \eqref{eq:distDual_fullDimSet} that $\sd(\ego(x),\ob) > \mathsf{d}$ is equivalent to $\exists \lambda\succeq_{\K^*}0,\mu\succeq_{\bar\K^*}0 \colon -g^\top \mu +(At(x) - b)^\top \lambda  > \mathsf{d} ,~G^\top \mu + R(x)^\top A^\top \lambda = 0,~\|A^\top\lambda\|_* \leq 1 $. Due to homogeneity with respect to $\lambda$ and $\mu$, if the previous condition is satisfied, then there also exists a $\lambda'\succeq_{\K^*}0$ and $\mu'\succeq_{\bar K^*}0$ such that $-g^\top \mu +(At(x) - b)^\top \lambda  > \mathsf{d} ,~G^\top \mu + R(x)^\top A^\top \lambda = 0,~\|A^\top\lambda\|_* = 1$. This concludes the proof.
\end{proof}
Theorem~\ref{thm:signDist_fullDimSet} allows us to formulate the following soft-constrained \emph{minimum-penetration} optimal control problem
\begin{equation}\label{eq:MPC_genCollAvoid_fullSet}
	\begin{array}{lll}
		 \displaystyle\min_{\mathbf x, \mathbf u,\mathbf s, \bm\lambda, \bm\mu}  &\displaystyle \sum_{k=0}^{N} \left[ \ell(x_k, u_k) + \kappa\cdot\sum_{m=1}^M s_{k}^{(m)}  \right]  \\
		\ \ \ \text{ s.t.} & x_0 = x_S,~ x_{N+1} = x_F, \\
					& x_{k+1} = f(x_k,u_k),~h(x_k, u_k) \leq 0, \\
					& -g^\top\mu_k^{(m)}+(A^{(m)}\,t(x_k) - b^{(m)})^\top \lambda_k^{(m)}  > -s_{k}^{(m)},\\
					& G^\top \mu_k^{(m)} + R(x_k)^\top {A^{(m)}}^\top\lambda_k^{(m)} =0, \\
					& \|{A^{(m)}}^\top\lambda_k^{(m)}\|_* = 1, \\
					& s_k^{(m)}\geq0,~\lambda_k^{(m)}\succeq_{\K^*}0  ,~\mu_k^{(m)}\succeq_{\bar\K^*}0, \\
					& \text{for } k=0,\ldots,N,\ m=1,\ldots,M.
	\end{array}
\end{equation}
where $s_k^{(m)} \in \R_+$ is the slack variable associated to obstacle $\ob^{(m)}$ at time step $k$, and $\kappa\geq0$ is a weight factor that keeps the slack variable as small as possible. Smoothness of \eqref{eq:MPC_genCollAvoid_fullSet} is ensured if $\|\cdot\|$ is the Euclidean distance, and $\K$ and $\bar{\K}$ are either the standard cone or the second-order cone, see  Remark~\ref{rem:smoothness} for details.

In the following sections, we illustrate our obstacle avoidance formulation on two applications: a quadcopter path planning problem  where the point-mass formulation is used (Section~\ref{sec:ex_quadcopter}), and an automated parking problem, where the full-dimensional obstacle avoidance problem formulation is used (Section~\ref{sec:ex_autParking}).

\section{Example 1: Quadcopter Path Planning}\label{sec:ex_quadcopter}
In this section, we illustrate reformulations \eqref{eq:MPC_dualDist_pointMass} and \eqref{eq:MPC_genCollAvoid_pointMass} on a quadcopter navigation problem, where the quadcopter must find a  path from one end of the room to the other end, while avoiding a low-hanging wall and passing through a small window hole, see Fig.~\ref{fig:TrajQuad}.

\begin{figure}[htb]
        \centering
        \ifArXiv
		\includegraphics[trim = 0mm 00mm 0mm 00mm, clip, width=0.55\textwidth]{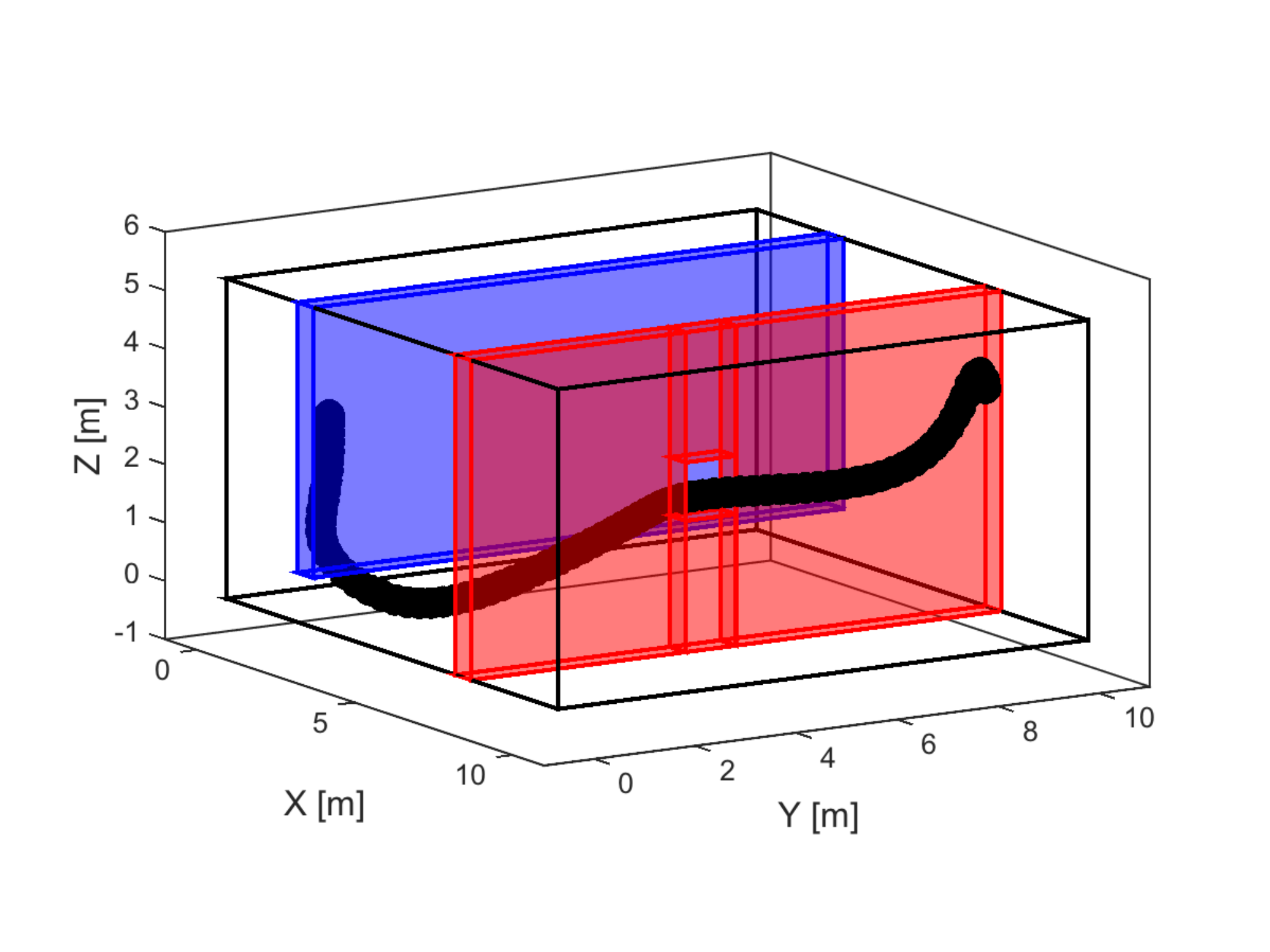}
	\else
		\includegraphics[trim = 0mm 00mm 0mm 00mm, clip, width=0.45\textwidth]{Figures/TrajQuad_3D_1}
	\fi
        \vspace{-0.2cm}
        \caption{Setup of quadcopter example together with a (locally optimal) point-to-point trajectory. The start position is behind the blue wall at $X=1$, and the end position in front of red wall at $X=9$. The quadcopter needs to fly below the blue wall and pass through a window in the red wall, see \url{https://youtu.be/7WLNJhaHcoQ} for an animation. The black circles illustrate the safety distance $\mathsf d_\textnormal{min}=0.25$\,m which models the shape of the quadcopter.}
        \label{fig:TrajQuad}
\end{figure}

\subsection{Environment and Obstacle Modeling}
The size of the room is $10.5 \times 10.5 \times 5.5$\,m, and we
see from Fig.~\ref{fig:TrajQuad} that the direct path between the start and end position is blocked by two obstacles. The first obstacle, a low-hanging wall, blocks the entire upper part of the room, and can only be passed from below. The second obstacle, another wall, blocks the entire room, but has a small window through which the quadcopter must pass to reach its target position. We approximate the shape of the quadcopter by a (Euclidean) sphere of radius 0.25\,m. In the framework of \eqref{eq:MPC_dualDist_pointMass} and \eqref{eq:MPC_genCollAvoid_pointMass}, the shape of the quadcopter can be taken into account by requiring a safety distance of $\mathsf d_\textnormal{min}=0.25$\,m.

The first wall, which can only be passed from below, is placed at $X$ = 2\,m, and the passage below is 0.85\,m high. The second wall is placed at $X$ = 7\,m, and the window (size $1\times1$m) is placed in the middle of the second wall at a height of $Z = 2.5$\,m. Finally the depth of both walls is 0.5\,m. This obstacle formation can be formally formulated using five axis-aligned rectangles, where the first obstacle is represented by one such rectangle and the window can be modeled as the union of four rectangles, see Fig.~\ref{fig:TrajQuad}. The collision avoidance constraints with respect to the four outer walls of the room are achieved by appropriately upper- and lower-bounding the $(X,Y,Z)-$coordinates of the quadcopter.

\subsection{Quadcopter Model}
We consider the standard quadcopter model as used in \cite{Mellinger2012}, which is derived by finding the equation of motion of the center of gravity (CoG) and summarized next. In this model, $X,Y,Z$ denote the position of the CoG in the world frame, and we use the $Z$-$X$-$Y$ Euler angles do describe the rotation of the quadcopter, where $\phi$ is the pitch angle, $\theta$ is the roll angle, and $\psi$ is the yaw angle. The rotation matrix that translates from the world to the body frame, which is defined with respect to the CoG, is hence given by
\begin{align*}
    R_{WB} = \begin{bmatrix}
    c_\psi c_\theta - s_\phi s_\psi s_\theta & -c_\phi s_\psi & c_\psi s_\theta + c_\phi s_\psi s_\theta\\
    s_\psi c_\theta + s_\phi c_\psi s_\theta &  c_\phi c_\psi & s_\psi s_\theta - s_\phi c_\psi c_\theta\\
    - c_\phi s_\theta & s_\phi  & c_\phi c_\theta\\
    \end{bmatrix}\,,
\end{align*}
where $s_\phi := \sin(\phi)$ and $c_\phi := \cos(\phi)$. The accelerations of the CoG can be derived by considering the sum of the forces produced by the four rotors $F_i$ which point in positive z-direction in the body frame, and the gravity force which acts on the negative z-direction in the world frame, resulting in the following equation of motion,
\begin{subequations}
\begin{align}\label{eq:quadEq1}
    m \begin{bmatrix} \ddot{X} \\ \ddot{Y} \\ \ddot{Z} \end{bmatrix} = \begin{bmatrix} 0 \\ 0 \\ -mg \end{bmatrix} + R_{WB} \begin{bmatrix} 0 \\ 0 \\ \sum_{i=1}^4 F_i \end{bmatrix}\,,
\end{align}
where $m$ is the mass of the quadcopter and $\ddot X$, $\ddot Y$ and $\ddot Z$ are the second time derivatives of $X$, $Y$ and $Z$, respectively. The attitude dynamics of the quadcopter is derived in the body rates $p$, $q$, and $r$ which are related to the Euler angles through the following rotation matrix,
\begin{align}\label{eq:quadEq2}
   \begin{bmatrix} \dot \phi \\ \dot \theta \\ \dot \psi \end{bmatrix} = \begin{bmatrix} c_\theta & 0 & s_\theta \\ s_\theta t_\phi & 1 & - c_\theta t_\phi\ \\ -s_\theta/c_\phi & 0 & c_\theta/c_\phi  \end{bmatrix} \begin{bmatrix} p \\ q \\ r \end{bmatrix}\,.
\end{align}
The body rates are given by the following equation of motion, which is driven by the four rotor forces $F_i$, as well as the corresponding moments $M_i$ and has the following form,
\begin{align}\label{eq:quadEq3}
    I \begin{bmatrix} \dot p \\ \dot q \\ \dot r \end{bmatrix} = \begin{bmatrix} L (F_2 - F_4) \\ L (F_3 - F_1) \\ M_1-M_2+M_3-M_4 \end{bmatrix} -  \begin{bmatrix} p \\ q \\ r \end{bmatrix} \times I \begin{bmatrix} p \\ q \\ r \end{bmatrix} \,,
\end{align}
where $I$ is the inertia matrix which in our case is diagonal and $L$ is the distance from the CoG to the rotor. The rotor forces and moments depend quadratically on the motor speed $\omega_i$, which are the control inputs and are defined as follows,
\begin{align}\label{eq:quadEq4}
    F_i = k_F \omega_i ^2\,, \qquad M_i = k_M \omega_i ^2\,,
\end{align}
where, $k_F$ and $k_M$ are constants depending on the rotor blades. Hence, the state of the quadcopter is $x = [X,Y,Z,\phi,\theta,\psi,\dot X,\dot Y,\dot Z,p,q,r]$ 
and the inputs are the four rotor speeds $u = [\omega_1,\omega_2,\omega_3,\omega_4]$.
\end{subequations}

The parameters of the model, as well as the bounds on the inputs, are taken from \cite{MellingerPhD}, which corresponds to a quadcopter which weighs 0.5\,kg and has a diameter of half a meter. Bounds on the angles, velocities and body rates are considered, and the dynamics can be brought into the form \eqref{eq:stateDynamics} using a (forward) Euler discretization, such that $x_{k+1} = x_k + T_\text{opt} \tilde f(x_k,u_k)$, where $T_\text{opt}$ is the sampling time, and $\tilde f(\cdot,\cdot)$ is the continuous-time dynamics that can be obtained from \eqref{eq:quadEq1}--\eqref{eq:quadEq4}.

\subsection{Cost function}
Our control objective is to navigate the quadcopter as fast as possible, while avoiding excessive control inputs. We combine these competing goals as a weighted sum of the form $J = q \tau_F + \sum_{k=0}^{N-1} u_k^T R u_k$,
where $\tau_F$ is the final time and $R = R^\top\succeq 0$, and $q\geq0$ are weighting factors. 
Motivated by \cite{Fahroo2000}, we do not directly minimize $\tau_F$; instead, observing that  $\tau_F = N T_\text{opt}$, we will treat the discretization time $T_\text{opt}$ as a decision variable. This allows the use of the slightly modified cost function 
\begin{equation}\label{eq:cost_used}
    J(\mathbf u, T_\text{opt}) = q N T_{\text{opt}} + \sum_{k=0}^{N-1} u_k^T R u_k
\end{equation} which will be used in the numerical simulations later on.

Treating $T_\text{opt}$ as an optimization variable has the additional benefit that the duration of the maneuver does not need to be fixed a priori, allowing us to avoid feasibility issues caused by a too short maneuver lengths. We point out that having $T_\text{opt}$ as a decision variable  comes at the cost of introducing an additional decision variable $T_\text{opt}$, which renders the dynamics ``more non-linear", which can be seen when looking at the Euler discretization in the previous section.

\subsection{Choice of Initial Guess}
Recall that \eqref{eq:MPC_dualDist_pointMass} and \eqref{eq:MPC_genCollAvoid_pointMass} are non-convex optimization problems, and hence computationally challenging to solve in general. In practice, one has to content oneself with a locally optimal solution that, for instance, satisfied the Karush-Kuhn-Tucker (KKT) conditions \cite{WachterIPOPT2006}, since most numerical solvers operate locally. Furthermore, it is well-known that the solution quality critically depends on the initial guess (``warm starting point") that is provided to the solvers, and that different initial guesses can lead to different (local) optima. Unfortunately, computing a good initial guess is often difficult and highly problem dependent; ideally, the initial guess should be obstacle-free and approximately satisfy the system dynamics.

For the quadcopter example, we have observed that the well-known A$^\star$ algorithm is able to provide good initial guesses. A$^\star$ is a graph search algorithm that is able to find obstacle-free paths by gridding the position space. It is similar to Dijkstra's algorithm, but uses a so-called heuristic function to perform a ``best-first" search, see \cite{Hart1968, Hart1972} for details. In our quadcopter example, we use the A$^\star$ algorithm to find an obstacle-free path in the position space, which we use to initialize the states that correspond to the quadcopter's position. The remaining states are initialized with zero, while inputs are initialized with the steady state input that keeps the quadcopter in a hoovering position. The dual variables $\lambda_k^{(m)}$  are initialized with 0.05, and the discretization time $T_\text{opt}$ with 0.25. Fig.~\ref{fig:TrajQuadWS} depicts the initial guess used to generate the trajectory shown in Fig.~\ref{fig:TrajQuad}. Notice that, due to gridding, the path in Fig.~\ref{fig:TrajQuadWS} exhibits a zigzag pattern.

\begin{figure}[htb]
        \centering
        \ifArXiv
		\includegraphics[trim = 0mm 00mm 0mm 00mm, clip, width=0.55\textwidth]{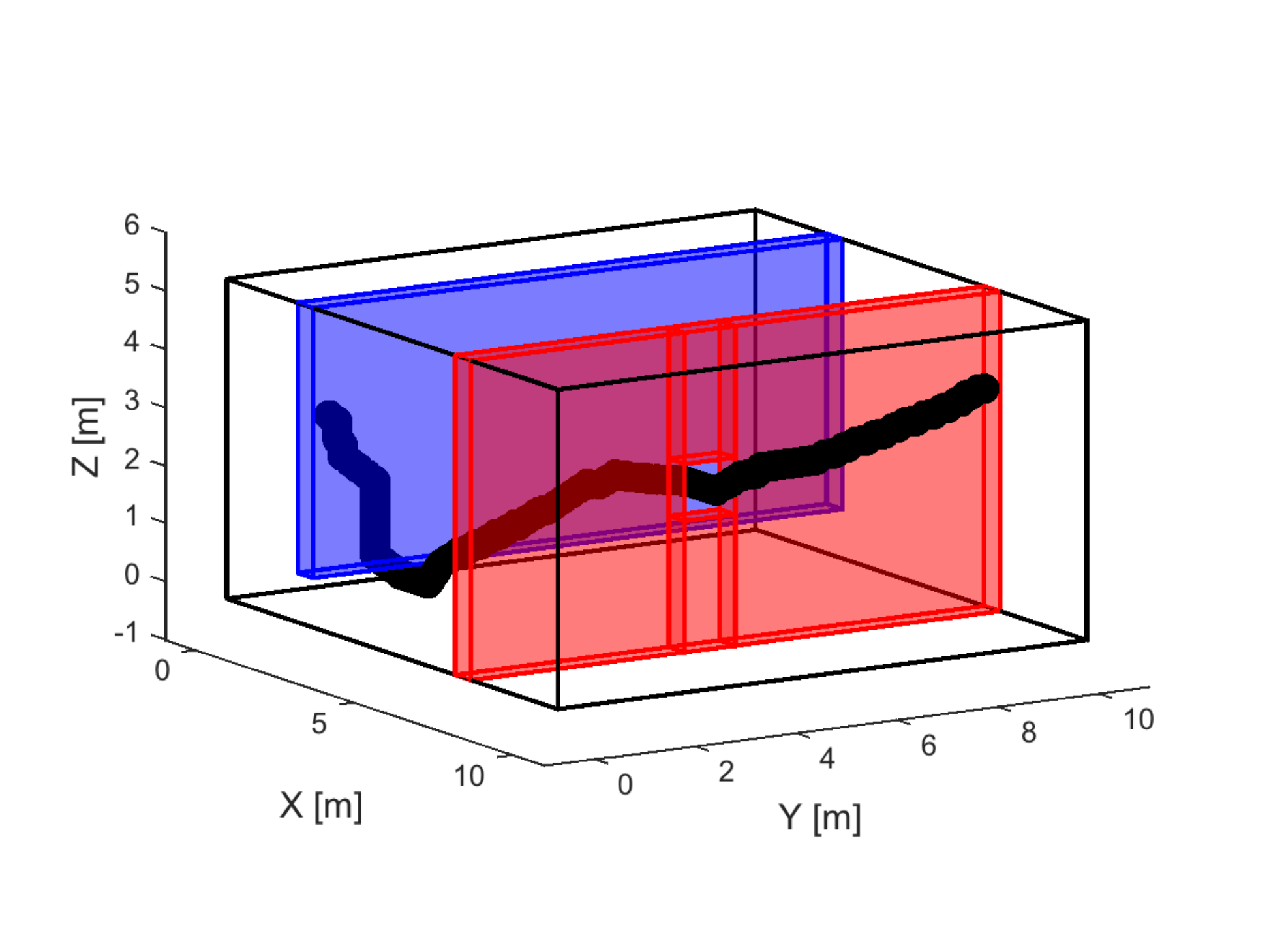}
	\else
		\includegraphics[trim = 0mm 00mm 0mm 00mm, clip, width=0.45\textwidth]{Figures/TrajQuadAs_3D_WS}
	\fi
        \vspace{-0.2cm}
        \caption{A sample warm start trajectory, obtained from the A$^\star$ algorithm, is shown. Notice that this trajectory avoids obstacles but does not satisfy the dynamic constraints of the quadcopter, which will be ``corrected" by solving \eqref{eq:MPC_dualDist_pointMass} and \eqref{eq:MPC_genCollAvoid_pointMass}. The black tube is the safety distance $\mathsf d_\textnormal{min}$ which is used to represent the shape of the quadcopter.}
        \label{fig:TrajQuadWS}
\end{figure}

\subsection{Simulation Results}
To verify the performance and robustness of our approach, we considered 36 path planning scenarios, each starting and ending in a hovering position. The starting point is always located at $(X,Y,Z)=(1,1,3)$\,m, and the finishing point is always located behind the wall with the window at $X=9$\,m, but with varying $Y$ and $Z$ coordinates. The final positions are generated by gridding the $(Y,Z)$ space with nine points in the $Y$ direction and four points in the $Z$ direction as shown in Fig.~\ref{fig:CompTimeQuad}. We tested both the distance formulation \eqref{eq:MPC_dualDist_pointMass} as well as the signed distance formulation \eqref{eq:MPC_genCollAvoid_pointMass}. 
The horizon $N$ equals the number of steps performed by the $A^\star$ algorithm, and takes values between 100 and 129 for the given setup and a grid size of 0.1\,m.
The sampling time $T_\text{opt}$ is restricted to lie between $0.125$\,s and $0.375$\,s. The optimization problems are implemented with the modeling toolbox JuMP in the programming language Julia \cite{DunningHuchetteLubin2017}, and solved using the general purpose nonlinear solver IPOPT \cite{WachterIPOPT2006}. The problems are solved on a 2013 MacBook Pro with an i7 processor clocked at 2.6\,GHz.

Table~\ref{tab:compTimeQuad} lists the minimum, maximum and average computation time of the A$^\star$ algorithm, and the time required to solve problems 
\eqref{eq:MPC_dualDist_pointMass} and \eqref{eq:MPC_genCollAvoid_pointMass}. Fig.~\ref{fig:CompTimeQuad} reports the solution time as a function of the finishing position, where a circle indicates that IPOPT has successfully found a solution. We see from Fig.~\ref{fig:CompTimeQuad} that both the distance and signed distance formulation are able to compute all paths successfully. Interestingly, however, the computation time pattern of these two approaches are not correlated; in other words, a ``difficult" scenario for the distance formulation might be ``easy" for the signed distance formulation, and vice versa. In practice, this implies that to obtain feasible trajectories as fast as possible, the navigation problem should be solved with both obstacle avoidance formulations, and the first solution should be taken. In our setup, such an approach would result in a worst case computation time of 28.9\,s, as opposed to 48.0\,s and 59.1\,s if the distance and signed distance reformulation are considered individually.

We close this section by pointing out that, with a maximum computation time of 2.8\,s, the time for A$^\star$ to find an initial guess is considerably lower than that for solving the optimization problems, see Table~\ref{tab:compTimeQuad}. This is not surprising since A$^\star$ only plans a path in the $(X,Y,Z)$-space and ignores the system dynamics which leads to zigzag behavior, see Fig.~\ref{fig:TrajQuadWS}. A dynamically feasible path is only obtained after solving the optimal control problems \eqref{eq:MPC_dualDist_pointMass} and \eqref{eq:MPC_genCollAvoid_pointMass} which, by explicitly taking into account system dynamics, smoothen and locally optimize the path provided by the A$^\star$ algorithm.

\begin{figure}[htb]
        \centering
        \ifArXiv
	        \includegraphics[trim = 20mm 0mm 20mm 5mm, clip, width=0.6\textwidth]{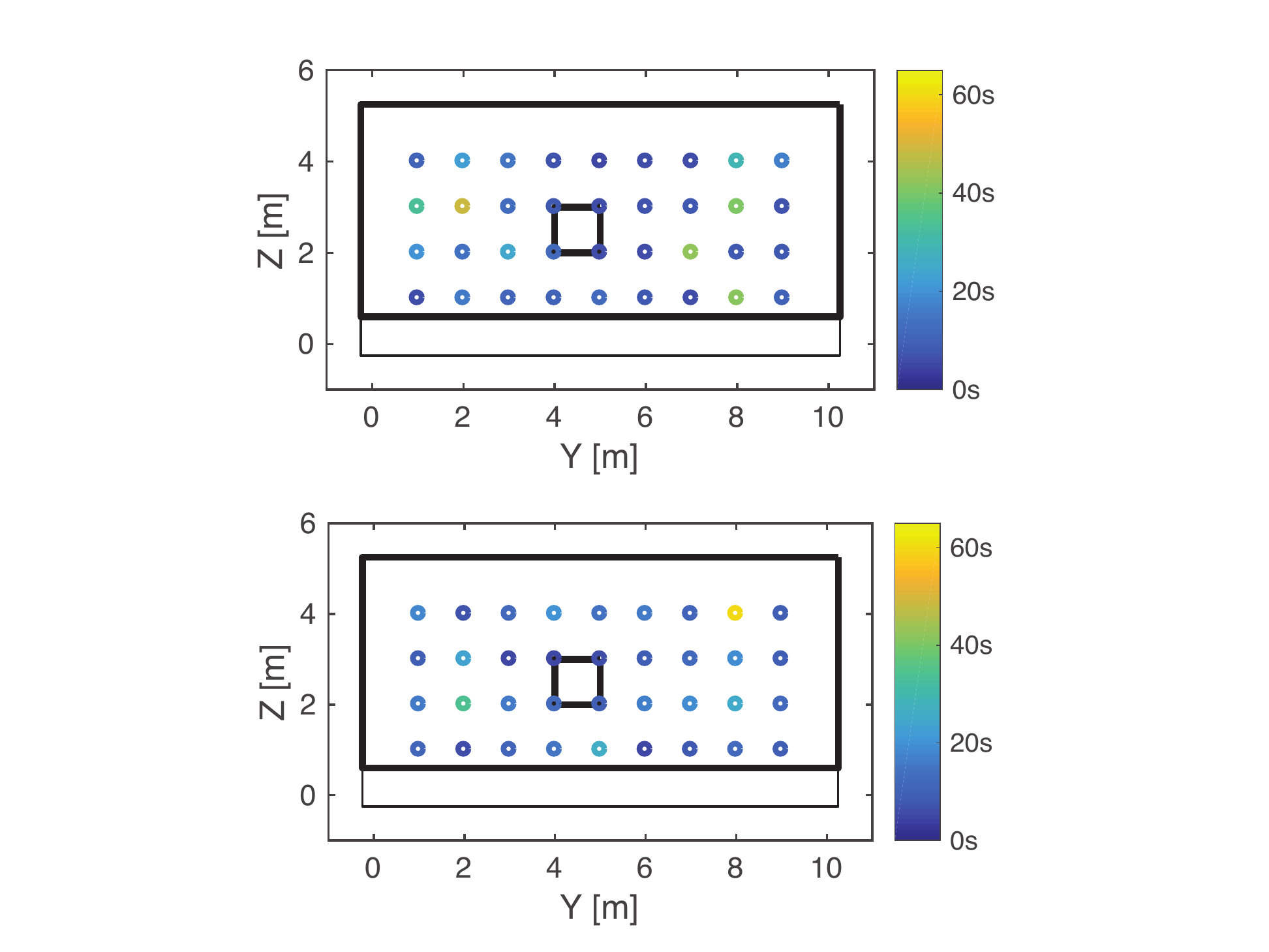}
	\else
		        \includegraphics[trim = 20mm 0mm 20mm 5mm, clip, width=0.5\textwidth]{Figures/CompTimeQuad_HOBCA_vert}
	\fi
        \vspace{-0.5cm}
        \caption{Solution time for quadcopter trajectory planning with distance formulation \eqref{eq:MPC_dualDist_pointMass} (top) and signed-distance formulation \eqref{eq:MPC_genCollAvoid_pointMass} (right).} 
        \label{fig:CompTimeQuad}
\end{figure}

\begin{table}[h]
\caption{Computation time of A$^\star$, distance formulation \eqref{eq:MPC_dualDist_pointMass} and signed distance formulation \eqref{eq:MPC_genCollAvoid_pointMass}.}
\label{tab:compTimeQuad}
\centering 
\begin{tabular}{@{}l c c c @{}}\toprule
\textit{Quadcopter navigation} & min & max & mean\\
 \midrule
warm start (A$^\star$)   & 0.5724\,s & 2.8157\,s &  1.6207\,s  \\ 
distance formulation \eqref{eq:MPC_dualDist_pointMass}               & 4.6806\,s & 47.9762\,s & 14.9716\,s \\ 
signed distance formulation \eqref{eq:MPC_genCollAvoid_pointMass}               &  4.7638\,s & 59.1031\,s & 14.3962\,s \\ 
\bottomrule
\end{tabular}
\end{table}

\section{Example 2: Autonomous Parking}\label{sec:ex_autParking}

As a second application for our collision avoidance formulation, we consider the autonomous parking problem for self-driving cars. In contrast to the quadcopter case, modeling a car as a point-mass and then approximating its shape with a ball can be very conservative and prevent the car from finding a feasible parking trajectory, especially when the environment is tight. In this section, we model the car as a rectangle, and then employ the full-dimensional formulation described in Section~\ref{sec:FullDimObj}. We show that our modelling framework allows us to find obstacle-free parking trajectories even in tight environments. Two scenarios are considered: reverse parking (Fig.~\ref{fig:TrajBack}) and parallel parking (Fig.~\ref{fig:TrajPar2}).

\begin{figure}[htb]
        \centering
        \ifArXiv
		\includegraphics[trim = 0mm 0mm 0mm 0mm, clip, width=0.55\textwidth]{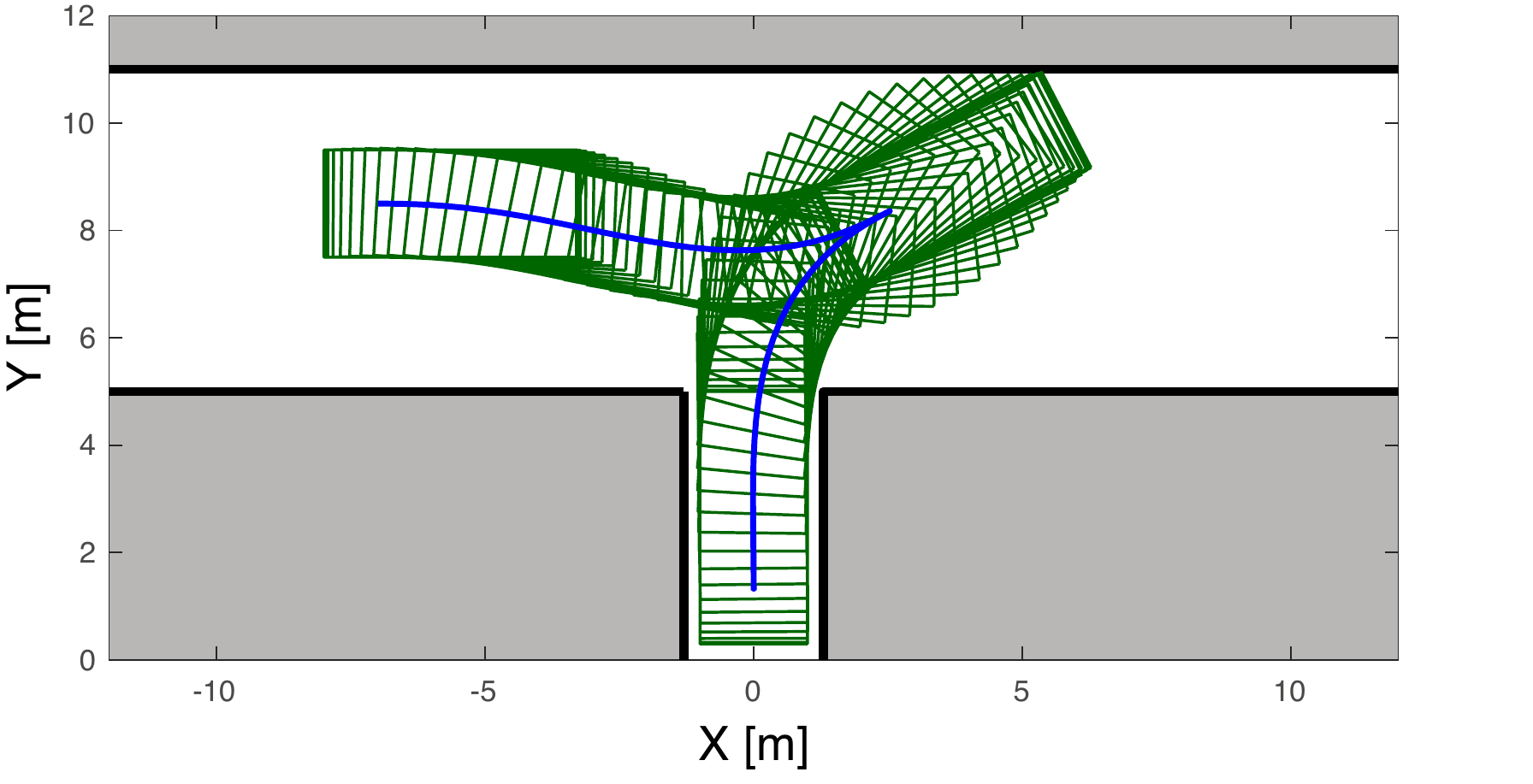}
	\else
		\includegraphics[trim = 0mm 0mm 0mm 0mm, clip, width=0.45\textwidth]{Figures/TrajBack1}
	\fi
        \vspace{-0.2cm}
        \caption{Reverse parking maneuver. The controlled vehicle is shown in green at every time step. Vehicle starts on the left facing to the right, and ends facing upwards, see \url{https://youtu.be/V7IUPW2qDFc} for an animation.}
        \label{fig:TrajBack}
\end{figure}

\begin{figure}[htb]
        \centering
        \ifArXiv
		\includegraphics[trim = 0mm 0mm 0mm 0mm, clip, width=0.55\textwidth]{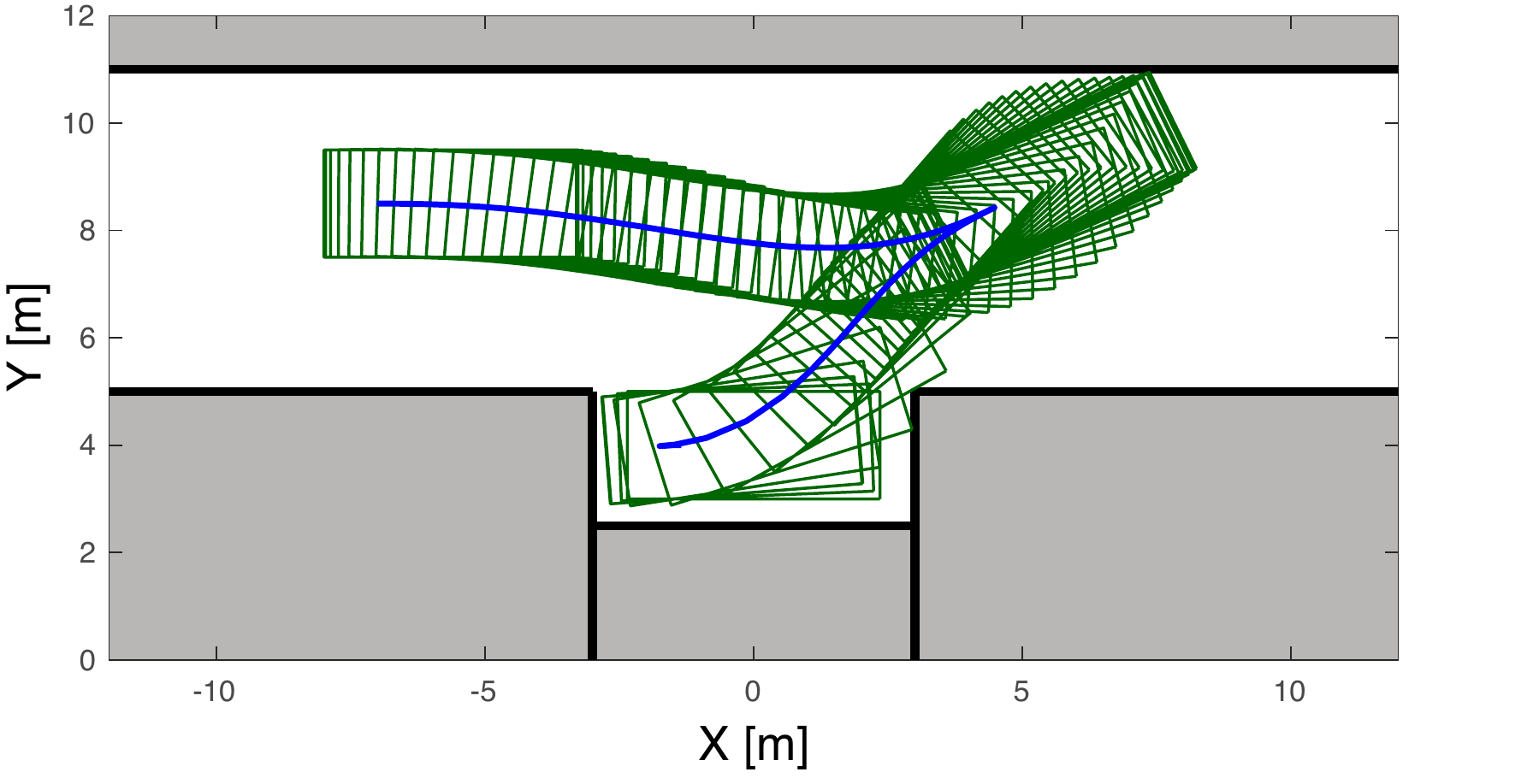}
	\else
		\includegraphics[trim = 0mm 0mm 0mm 0mm, clip, width=0.45\textwidth]{Figures/TrajPar1}
	\fi
        \vspace{-0.2cm}
        \caption{Parallel parking maneuver. The controlled vehicle is shown in green at every time step. Vehicle starts facing to the right, and ends facing to the right, see \url{https://youtu.be/FST7li4M6lU} for an animation.}
        \label{fig:TrajPar2}
\end{figure}

\subsection{Environment and Obstacle Modeling}
For the reverse parking scenario, the parking spot is assumed 2.6\,m wide and 5.2\,m  long. The width of the road, where the car can maneuver in, is 6\,m, see Fig.~\ref{fig:TrajBack} for an illustration. For the parallel parking scenario, the parking spot is 2.5\,m deep and 6\,m long, and the space to maneuver is 6\,m wide (Fig.~\ref{fig:TrajPar2}). Note that the obstacles in the reverse parking scenario can be described by three axis aligned rectangles, while the obstacles in the parallel parking scenario can be described by four axis aligned rectangles. In both cases, the controlled vehicle is modeled as a rectangle of size $4.7 \times 2$\,m, whose orientation is determined by the car's yaw angle.

\subsection{System Dynamics and Cost Function}
The car is described by the classical kinematic bicycle model, which is well-suited for  velocities used in typical parking scenarios. The states $(X,Y)$ correspond to the center of the rear axes, while $\varphi$ is the yaw angle with respect to the X-axis, and $v$ is the velocity with respect to the rear axes. The inputs are the steering angle $\delta$ and the acceleration $a$. Hence, the continuous-time dynamics of the car is given by
\begin{align*}
\dot{X} &= v \cos(\varphi)\,, \\
\dot{Y} &= v \sin(\varphi)\,, \\
\dot{\varphi} & = \frac{v \tan(\delta)}{L}\,, \\
\dot{v} &= a\,,
\end{align*}
where $L=2.7$\,m is the wheel base of the car. The steering angle is limited between $\pm 0.6$\,rad (approximately 34 deg), with rate constraints $\dot\delta\in[-0.6,0.6]$\,rad/s; acceleration is limited to be between $\pm 1$\,m/s$^2$. 
We limit the car's velocity to lie between $-1$ and $2$\,m/s. Similar as in the quadcopter case, the continuous-time dynamics are discretized using a forward Euler scheme. Finally, the same cost function as in the quadcopter is used, i.e., a weighted sum between the discretization-time and control effort is considered.

\subsection{Initial Guess}
Similar to the previous example, the solution quality of the non-convex optimization problems heavily depends on the initial guess provided to the numerical solvers. Unfortunately, it turns out that the A$^\star$ algorithm used in the quadcopter example generally provides a poor warm start as it is unable to take into account the vehicle's non-holonomic dynamics\footnote{Roughly speaking, A$^\star$ will return trajectories that would require the vehicle to move sideways. Simulations indicate that the numerical solvers are typically not able to ``correct" such a behavior and unable to recover a feasible solution when initialized with A$^\star$.}. To address this issue, we resort to a modified version of A$^\star$, called Hybrid A$^\star$ \cite{DolgovThrunPathPlanning2010}. The main idea behind Hybrid A$^\star$ is to use a simplified vehicle model with states $(X,Y,\varphi)$, and a finite number of steering inputs to generate a coarse parking trajectory. Like A$^\star$, Hybrid A$^\star$ grids the state space and performs a tree search, where the nodes are expanded using the simplified vehicle model. We refer the interested reader to \cite{DolgovThrunPathPlanning2010} for details on Hybrid A$^\star$. Fig.~\ref{fig:TrajBackWS} and Fig.~\ref{fig:TrajParWS2} depict two trajectories obtained from the Hybrid A$^\star$ algorithm. Notice that, due to discretization of state and input, the paths generated by Hybrid A$^\star$ seems more ``bang-bang" and less ``smooth" than those shown in Fig.~\ref{fig:TrajBack} and Fig.~\ref{fig:CompTimeQuad}.

\begin{figure}[htb]
        \centering
        \ifArXiv
		\includegraphics[trim = 0mm 0mm 0mm 0mm, clip, width=0.55\textwidth]{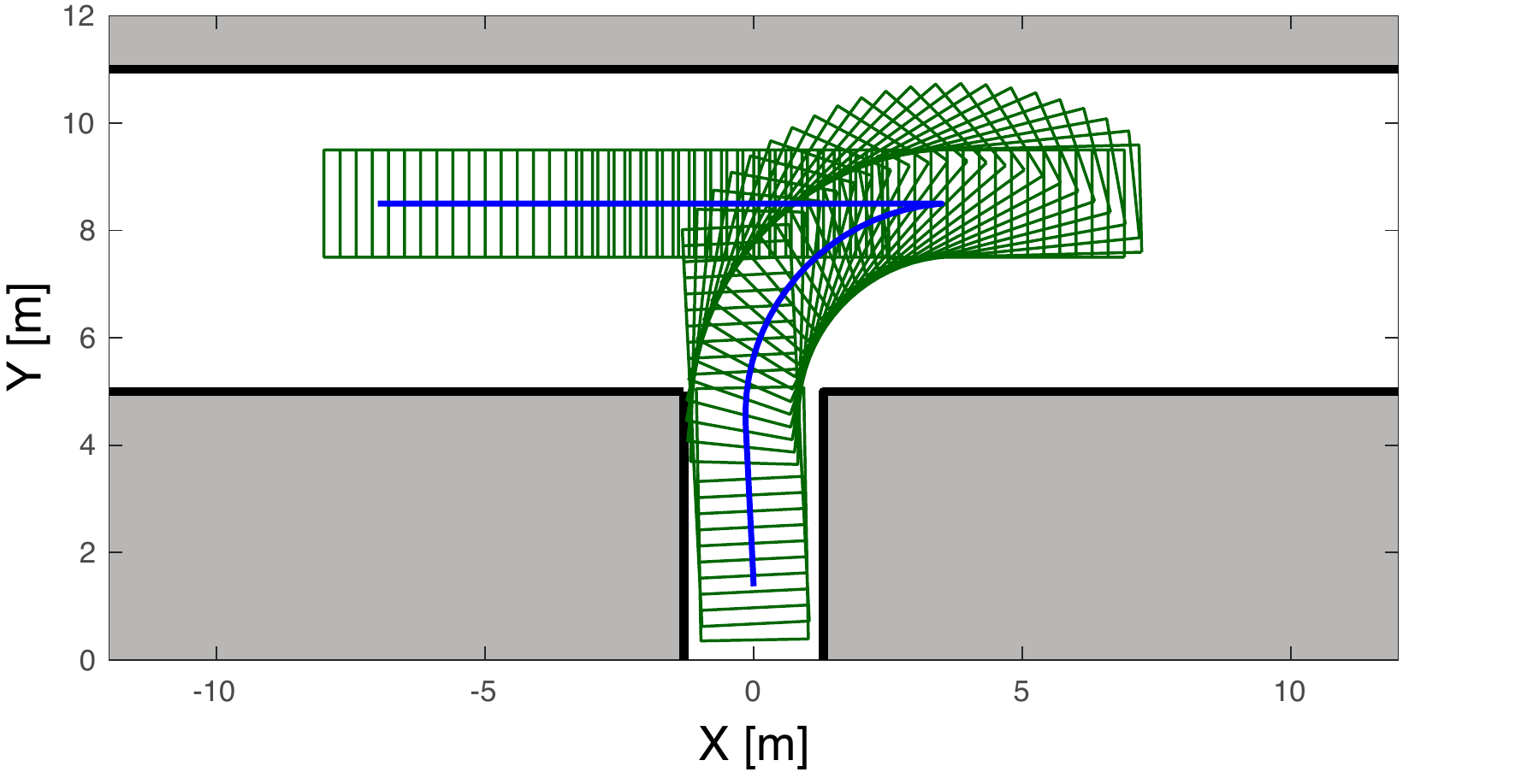}
	\else
		\includegraphics[trim = 0mm 0mm 0mm 0mm, clip, width=0.45\textwidth]{Figures/TrajBack1_WS}
	\fi
        \vspace{-0.2cm}
        \caption{Initial guess provided by Hybrid A$^\star$ for reverse parking.}
        \label{fig:TrajBackWS}
\end{figure}

\begin{figure}[htb]
        \centering
        \ifArXiv
	        \includegraphics[trim = 0mm 0mm 0mm 0mm, clip, width=0.55\textwidth]{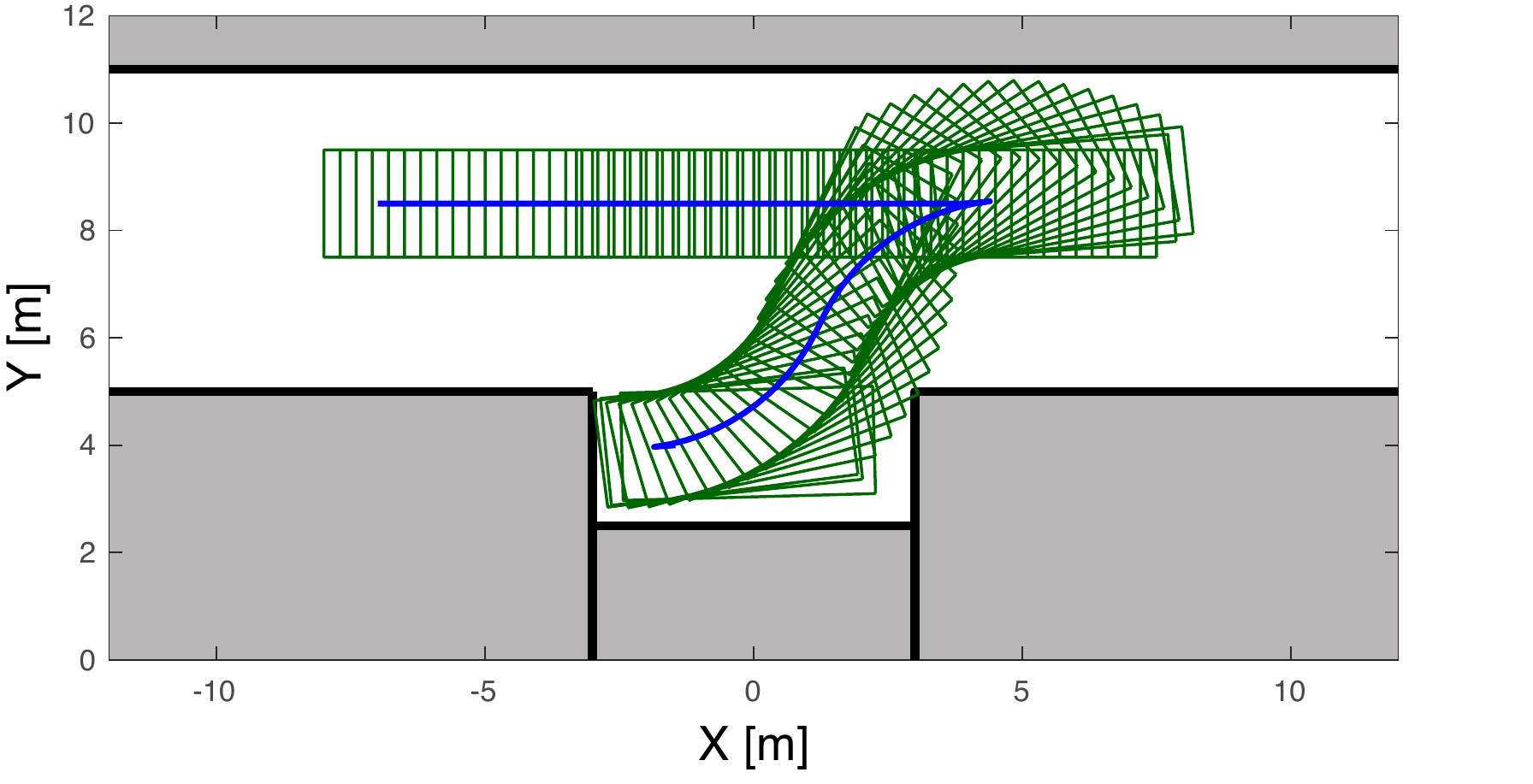}
	\else
	        \includegraphics[trim = 0mm 0mm 0mm 0mm, clip, width=0.45\textwidth]{Figures/TrajPar1_WS}
\	\fi
        \vspace{-0.2cm}
        \caption{Initial guess provided by Hybrid A$^\star$ for parallel parking.}
        \label{fig:TrajParWS2}
\end{figure}

\subsection{Simulation Results}
To evaluate the performance of formulations \eqref{eq:MPC_dualDist_fullSet} and \eqref{eq:MPC_genCollAvoid_fullSet}, we study the reverse and parallel trajectory planning problem.
For both cases, we consider different starting positions but one fixed end position at $X=0$\,m, and investigate the computation time of each method. The starting positions are generated by gridding the maneuvering space within $X\in [-10,10]$\,m and $Y \in [6.5,9.5]$\,m, with 21 grid points in the $X$ direction and 4 grid points in $Y$ direction, see Fig.~\ref{fig:TrajBackCompTime}. The orientation for all the starting points is $\varphi = 0$, resulting in a total of 84 starting points. The horizon length $N$ is given by the Hybrid A$^\star$ algorithm. The optimization problems are again implemented with the modeling toolbox JuMP in the programming Julia \cite{DunningHuchetteLubin2017}, and IPOPT \cite{WachterIPOPT2006} is used as the numerical solver. The problems are solved on a 2013 MacBook Pro with a i7 processor clocked at 2.6 GHz. A Julia-based example code can be found at \url{https://github.com/XiaojingGeorgeZhang/OBCA}.

We begin by considering the reverse parking case, where one specific maneuver is illustrated in Fig.~\ref{fig:TrajBack}. The computation times for the distance and the signed distance formulation are listed in Table~\ref{tab:compTimeParking} (upper half) and shown in Fig.~\ref{fig:TrajBackCompTime}, for all 84 initial conditions. Table~\ref{tab:compTimeParking} indicates that the distance formulation is generally faster than the signed-distance formulation, with a mean computation time of 0.60\,s compared to 1.03\,s. This is not surprising since the signed distance formulation has more decision variables due to the presence of the slack variables $s_k^{(m)}$, see \eqref{eq:MPC_genCollAvoid_fullSet}. Furthermore, we see from Fig.~\ref{fig:TrajBackCompTime} that both approaches are able to find feasible parking trajectories, for all 84  considered initial conditions.
Interestingly, we see that there are no obvious relations between starting positions and solution times.

\begin{figure}[htb]
        \centering
        \ifArXiv
		\includegraphics[trim = 0mm 0mm 0mm 0mm, clip, width=0.50\textwidth]{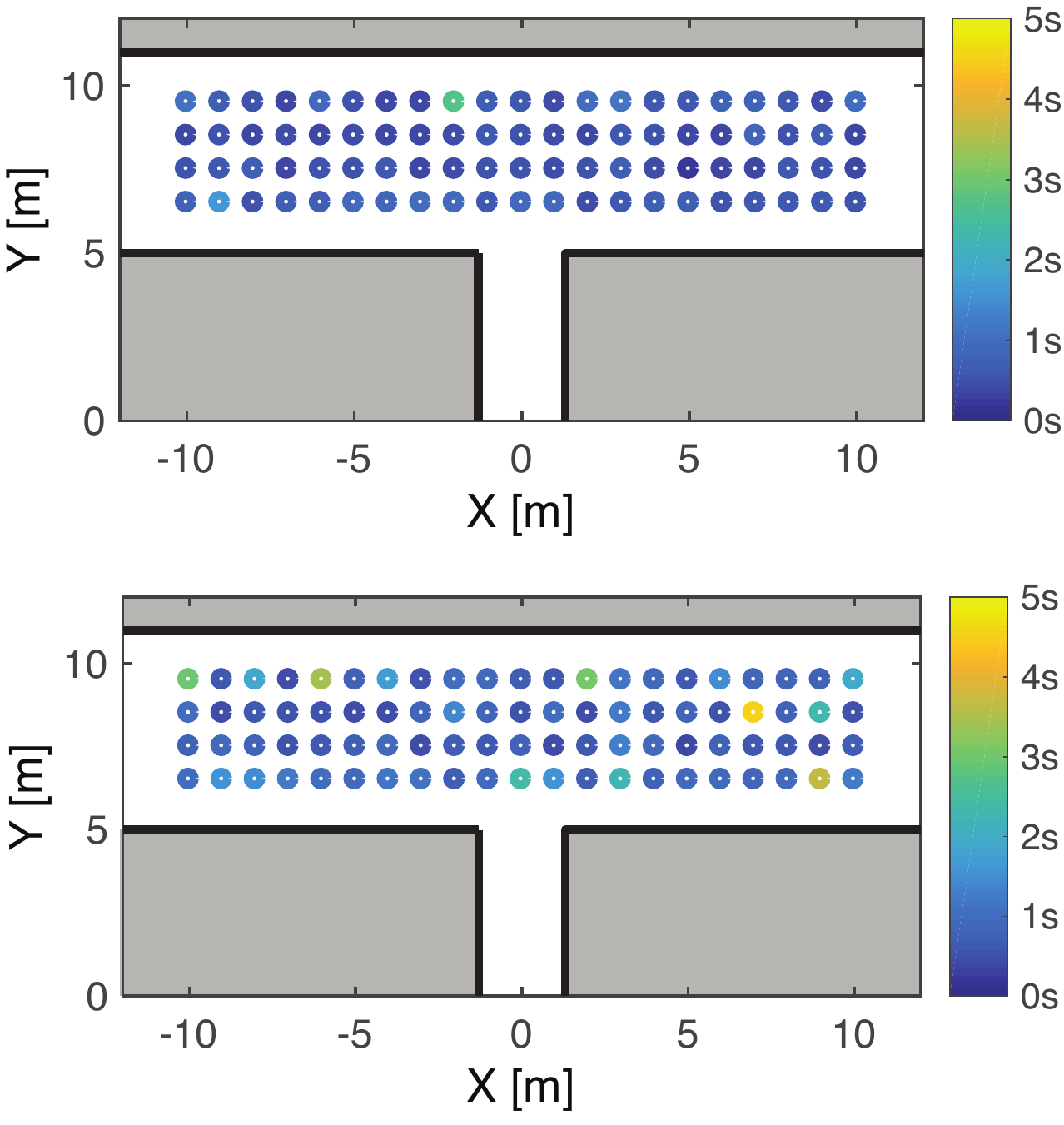}
	\else
		\includegraphics[trim = 0mm 0mm 0mm 0mm, clip, width=0.40\textwidth]{Figures/CompTimeParkingBack_HOBCA}
	\fi
        \vspace{-0.2cm}
        \caption{Solution time for reverse parking with distance formulation \eqref{eq:MPC_dualDist_fullSet} (top) and signed-distance formulation \eqref{eq:MPC_genCollAvoid_fullSet} (bottom).}
        \label{fig:TrajBackCompTime}
\end{figure}

The computation times of the parallel parking case is shown in Fig.~\ref{fig:TrajParCompTime} and Table~\ref{tab:compTimeParking} (lower half). Similar as in the reverse parking case, we see that both approaches have a 100\% success rate, and that, again due to the presence of the slack variables, the signed distance formulation requires longer computation time (1.67\,s on average) than the distance formulation (0.87\,s on average). Compared to reverse parking we see that parallel parking is computationally more demanding. We believe that this is due to the fact that the paths in parallel parking are generally longer than in reverse parking, since the car first needs to drive to the right before it can back into the parking lot, see also Fig.~\ref{fig:TrajPar2}.

\begin{table}[h]
\caption{Computation time of Hybrid A$^\star$, distance formulation \eqref{eq:MPC_dualDist_fullSet} and signed distance formulation \eqref{eq:MPC_genCollAvoid_fullSet}.}
\label{tab:compTimeParking}
\centering 
\begin{tabular}{@{}l c c c @{}}\toprule
& min & max & mean\\
 \midrule
 \textit{Reverse Parking} & & & \\
warm start (Hybrid A$^\star$)   & 0.0315\,s &   3.2230\,s & 0.5491\,s  \\ 
distance formulation \eqref{eq:MPC_dualDist_fullSet}  & 0.2111\,s & 2.7166\,s &  0.6046\,s \\ 
signed distance formulation \eqref{eq:MPC_genCollAvoid_fullSet}  &   0.3200\,s &  4.4840\,s  & 1.0344\,s \\ \midrule
 \textit{Parallel Parking} & & & \\
warm start (Hybrid A$^\star$)    & 0.0421\,s & 2.4766\,s & 0.3012\,s  \\ 
distance formulation \eqref{eq:MPC_dualDist_fullSet}  & 0.2561\,s &  3.9885\,s & 0.8682\,s \\ 
signed distance reformulation \eqref{eq:MPC_genCollAvoid_fullSet} & 0.3850\,s & 6.7266\,s  &1.6703\,s \\ 
\bottomrule
\end{tabular}
\end{table} 

\begin{figure}[htb]
        \centering
        \ifArXiv
		\includegraphics[trim = 0mm 0mm 0mm 0mm, clip, width=0.55\textwidth]{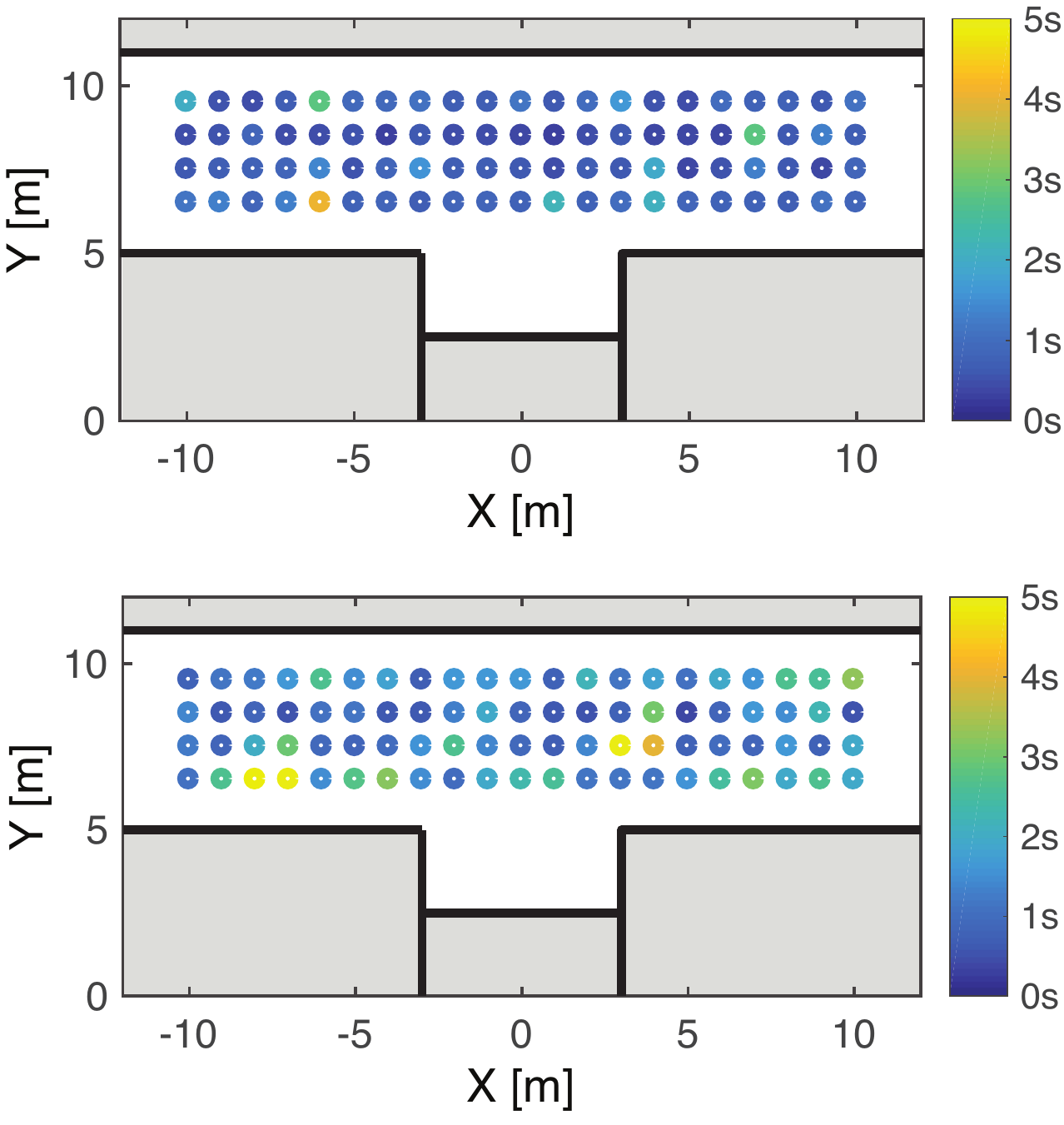}
	\else
		\includegraphics[trim = 0mm 0mm 0mm 0mm, clip, width=0.45\textwidth]{Figures/CompTimeParkingPar_HOBCA}
	\fi
        \vspace{-0.2cm}
        \caption{Solution time for parallel parking with distance formulation \eqref{eq:MPC_dualDist_fullSet} (top) and signed-distance formulation \eqref{eq:MPC_genCollAvoid_fullSet} (bottom).      }
        \label{fig:TrajParCompTime}
\end{figure}

We close this section with the following two remarks: First, we point out that, while the paths generated by the Hybrid A$^\star$ are collision-free and kinodynamically feasible, they are challenging to track with low-level path following controllers because they do not incorporate information on the velocity and do not take into account the rate constraints  in both steering and acceleration, allowing the car to take ``aggressive" maneuveures.
As demonstrated in \cite{ZhangLinigerBorrelli_cdc2018}, this leads, in general, to significantly longer maneuvering times. Second, we notice from Table~\ref{tab:compTimeParking} that the computation time of Hybrid A$^\star$ is comparable to those of (signed) distance. Furthermore, the maximum overall computation time  of Hybrid A$^\star$ and signed distance reformulation is  7.7\,s (reserve parking), and 9.2\,s (parallel parking). This implies that, when initialized with Hybrid A$^\star$, the proposed collision avoidance framework enables real-time autonomous parking in tight environments.

\section{Conclusion}\label{sec:conclusion}
In this paper, we presented smooth reformulations for collision avoidance constraints for problems where the controlled object and the obstacle can be represented as the finite union of convex sets. We have shown that non-differentiable polytopic obstacle constraints can be dealt with via dualization techniques to preserve differentiability, allowing the use of gradient- and Hessian-based optimization methods. The presented reformulation techniques are exact and non-conservative, and apply equally to point-mass and full-dimensional controlled vehicles. Furthermore, in case collision-free trajectories cannot be generated, our framework allows us to find least-intrusive trajectories, measured in terms of penetration.

Our numerical studies, performed on a quadcopter trajectory planning and autonomous car parking example, indicate that, when appropriately initialized, the proposed framework is robust, real-time feasible, and able to generate dynamically feasible trajectories. Furthermore, we have seen that the initialization method is problem-dependent, and should be chosen depending on the system at hand. Current research focuses on appropriately warm starting the discretization time $T_\text{opt}$, as well as on methods for further speeding up computation times.

\section*{Appendix: Proof of Proposition~\ref{prop:penDual_pointMass}}\label{app:proofs}
We need the following standard results from convex analysis:
\begin{lemma}\label{lem:suppHypPlane}
	Let $\mathbb C \subset\R^{n}$ be a compact convex set. 
	\begin{enumerate}
	    \item[$(i)$] Then, $\mathcal H_{\mathbb C} (z) := \{x\in\R^n \colon z^\top x \leq \max_{y\in\mathbb C} y^\top z\}$ is a supporting half space with normal vector $z$, and  
	    \begin{equation}\label{eq:suppHypPlane}
		    \mathbb C = \bigcap_{z\colon \|z\|=1} \mathcal H_{\mathbb C}(z),
	    \end{equation}
	    for any norm $\| \cdot \|$.
	    \item[$(ii)$] Let $\partial \mathcal H_\mathbb{C}(z) := \{x\in\R^n \colon z^\top x = \max_{y\in\mathbb C} y^\top z\}$ be the supporting hyperplane with normal vector $z$. Then, for any $\bar x\in\mathbb{C}$, it holds that
	    \begin{equation}\label{eq:distSuppHypPlane}
	        \dist(\bar x,\partial\mathcal{H}_\mathbb{C}(z)) = \frac{\max_{y\in\mathbb{C}}\{ y^\top z \} - z^\top \bar x} {\|z\|_*},
	    \end{equation}
	    where $\text{dist}(\cdot,\cdot)$ is defined as in \eqref{eq:defDist}.
	\end{enumerate} 
\end{lemma}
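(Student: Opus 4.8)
The statement to prove is Lemma~\ref{lem:suppHypPlane}, which has two parts. Part~$(i)$ is the standard fact that a compact convex set equals the intersection of all its supporting half-spaces, indexed by unit normal vectors; part~$(ii)$ computes the distance from a point of the set to one of its supporting hyperplanes. The plan is to prove $(i)$ first via the separating hyperplane theorem, then derive $(ii)$ by a direct computation of the point-to-hyperplane distance, being careful about which norm appears where.

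\textbf{Part $(i)$.} First I would observe that each $\mathcal H_{\mathbb C}(z)$ contains $\mathbb C$ by construction, since $z^\top y \le \max_{y'\in\mathbb C} y'^\top z$ for every $y\in\mathbb C$; hence $\mathbb C \subseteq \bigcap_{\|z\|=1}\mathcal H_{\mathbb C}(z)$. For the reverse inclusion, I would take any $\bar x \notin \mathbb C$ and invoke the strict separating hyperplane theorem (valid because $\mathbb C$ is compact convex and $\{\bar x\}$ is compact convex and disjoint): there exists a vector $w\neq 0$ and a scalar $\alpha$ with $w^\top y \le \alpha < w^\top \bar x$ for all $y\in\mathbb C$. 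Normalizing $z := w/\|w\|$ (so $\|z\|=1$), this gives $\max_{y\in\mathbb C} z^\top y \le \alpha/\|w\| < z^\top \bar x$, so $\bar x \notin \mathcal H_{\mathbb C}(z)$, hence $\bar x \notin \bigcap_{\|z\|=1}\mathcal H_{\mathbb C}(z)$. This establishes $\bigcap_{\|z\|=1}\mathcal H_{\mathbb C}(z) \subseteq \mathbb C$ and therefore equality~\eqref{eq:suppHypPlane}. The fact that $\mathcal H_{\mathbb C}(z)$ is a genuine \emph{supporting} half-space (its boundary touches $\mathbb C$) follows because $\max_{y\in\mathbb C} y^\top z$ is attained on the compact set $\mathbb C$. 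Note this argument works for any norm $\|\cdot\|$ on the index set, since scaling $w$ to unit norm is all that is used.

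\textbf{Part $(ii)$.} Here I would compute $\dist(\bar x, \partial\mathcal H_{\mathbb C}(z))$ directly from its definition~\eqref{eq:defDist}, i.e.\ $\min\{\|t\| : \bar x + t \in \partial\mathcal H_{\mathbb C}(z)\}$, where $\partial\mathcal H_{\mathbb C}(z) = \{x : z^\top x = c\}$ with $c := \max_{y\in\mathbb C} y^\top z$. The constraint $\bar x + t \in \partial\mathcal H_{\mathbb C}(z)$ reads $z^\top t = c - z^\top \bar x$, a single linear equality, so the problem is $\min\{\|t\| : z^\top t = c - z^\top\bar x\}$. The value of this minimum-norm problem subject to one linear equality is the classical expression $|c - z^\top\bar x|/\|z\|_*$, where $\|\cdot\|_*$ is the dual norm of $\|\cdot\|$ (this is a one-line consequence of the definition of the dual norm, or of Hölder's inequality together with its tightness). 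Since $\bar x \in \mathbb C$ gives $z^\top\bar x \le c$, the absolute value simplifies to $c - z^\top\bar x \ge 0$, yielding exactly~\eqref{eq:distSuppHypPlane}.

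\textbf{Main obstacle.} None of the steps is deep; the content is entirely standard convex analysis. The point that most deserves care is the norm bookkeeping in part~$(ii)$: the translation $t$ is measured in the primal norm $\|\cdot\|$ (as fixed by the definition of $\dist$ in~\eqref{eq:defDist}), while the linear functional $z$ is normalized in that same primal norm in part~$(i)$, yet the distance formula~\eqref{eq:distSuppHypPlane} carries $\|z\|_*$ in the denominator. Getting the dual-norm factor in the right place — and recognizing that this is precisely the quantity that will later produce the constraint $\|A^\top\lambda\|_* = 1$ in Proposition~\ref{prop:penDual_pointMass} — is the one thing worth double-checking. A secondary minor point is ensuring the separating hyperplane in part~$(i)$ can be taken \emph{strict}, which is guaranteed by compactness of $\mathbb C$.
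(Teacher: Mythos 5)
Your proof is correct. The paper itself offers no proof of this lemma --- it is invoked in the Appendix as a ``standard result from convex analysis'' and used as-is in the proof of Proposition~\ref{prop:penDual_pointMass} --- so there is nothing to compare against; your argument (strict separation for the nontrivial inclusion in part~$(i)$, and the minimum-norm-subject-to-one-linear-equality computation yielding the dual-norm denominator in part~$(ii)$) is the standard one and fills the gap the authors left implicit.
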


\subsubsection*{Proof of Proposition~\ref{prop:penDual_pointMass}}
	First observe that $\pen(\ego(x),\ob) = \dist(\ego(x), \ob^\complement)$, where $\mathbb O^\complement \subset\R^n$ denotes the complement of the set $\mathbb O$. Using this relationship and recalling the definition of $\dist(\cdot,\cdot)$, it follows from \eqref{eq:suppHypPlane} that  $\pen(\ego(x),\ob) = \inf_{\{z\colon \|z\|_*=1\}} \dist(\ego(x), \partial\mathcal H_\mathbb{O}(z))$, where we exploited the fact that \eqref{eq:suppHypPlane} holds for any norm\footnote{Geometrically speaking, $\pen(\ego(x),\ob)$ is the minimum distance between $\ego(x)$ and any supporting hyperplane $\partial\mathcal H_\ob(z)$ of $\mathbb O$.}, and hence also the dual norm $\|\cdot\|_*$. 
	Furthermore, it follows from \eqref{eq:distSuppHypPlane} that, since $\|z\|_*=1$, $\dist(\ego(x),\partial\mathcal H_\mathbb{O}(z)) = \max_{y\in\mathbb O}\{y^\top z\} - z^\top \ego(x)$,  which allows us to rewrite the penetration function as $$\pen(\ego(x),\ob) = \inf_{\{z\colon \|z\|_*=1\}} \{  \max_{y\in\mathbb O}\{y^\top z\} - z^\top \ego(x)   \}.$$
	To see that the min-max problem is equivalent to  \eqref{eq:penDual_pointMass}, we use strong duality of convex optimization to reformulate the inner maximization problem as $\max_{y\in\mathbb O}\{y^\top z\} = \min_{\lambda} \{b^\top \lambda \colon A^\top \lambda = z,~ \lambda \succeq_{\K^*} 0\}$. Hence, $\pen(\ego(x),\ob) = \inf_{z,\lambda} \{b^\top \lambda - z^\top \ego(x) \colon \|z\|_*=1,~A^\top \lambda = z,~\lambda\succeq_{\K^*}0\} = \inf_{\lambda}\{(b-A\ego(x))^\top\lambda \colon \|A^\top \lambda\|_*=1,~\lambda\succeq_{\K^*}0\}$. Finally, we have $\pen(\ego(x),\ob) < \mathsf{p}_{\max} \Leftrightarrow \inf_{\lambda}\{(b-A\ego(x))^\top\lambda \colon \|A^\top \lambda\|_*=1,~\lambda\succeq_{\K^*}0\} < \mathsf{p}_{\max} \Leftrightarrow \lambda\succeq_{\K^*}0 \colon (b-A\,\ego(x) )^\top \lambda  < \mathsf{p}_\textnormal{max}$, which concludes the proof.

\ifArXiv
	\bibliographystyle{unsrt}
	\bibliography{library_GXZ}
\else
        \bibliographystyle{IEEEtran}   
        \bibliography{library_GXZ}
        \balance
        \begin{IEEEbiography}[{\includegraphics[width=1in,height=1.25in,clip,keepaspectratio]{Figures/GZ.jpg}}]{Xiaojing (George) Zhang} received his B.Sc and M.Sc. degrees in electrical engineering and information technology, in 2010 and 2012 from the Swiss Federal Institute of Technology (ETH) Zurich, Switzerland. In January 2017, he received his Ph.D. degree in automatic control, also from ETH Zurich. He is currently a post-doctoral fellow and the Associate Director at the Hyundai Center of Excellence at the University of California, Berkeley. His research interests include modeling, analysis, and control of stochastic uncertain systems, randomized algorithms, machine learning, as well as robust and stochastic optimization. Applications include autonomous vehicles, energy-efficient buildings and electric power systems.
        \end{IEEEbiography}
        
        \begin{IEEEbiography}[{\includegraphics[width=1in,height=1.25in,clip,keepaspectratio]{Figures/AlexLiniger.jpg}}]{Alexander Liniger} (M'14) received the B.Sc. and M.Sc. degrees in mechanical engineering from the Department of Mechanical and Process Engineering, ETH Zurich, Switzerland, in 2010 and 2013, respectively, and is currently pursuing the Ph.D. degree at the Automatic Control Laboratory, ETH Zurich, Switzerland. His research interests include model predictive control, viability theory as well as game theory and their application to autonomous driving and racing.
        \end{IEEEbiography}
        
        \begin{IEEEbiography}[{\includegraphics[width=1in,height=1.25in,clip,keepaspectratio]{Figures/Borrelli_Francesco}}]{Francesco Borrelli} received the Laurea degree in computer science engineering from the University of Naples Federico II, Naples, Italy, in 1998, and the Ph.D. degree from ETH Zurich, Zurich, Switzerland, in 2002.
        
        He is currently a Professor with the Department of Mechanical Engineering, University of California, Berkeley, CA, USA. He is the author of more than 100 publications in the field of predictive control and author of the book \textit{Constrained Optimal Control of Linear and Hybrid Systems} (Springer Verlag). His research interests include constrained optimal control, model predictive control and its application to advanced automotive control and energy efficient building operation.
        
        Dr. Borrelli was the recipient of the 2009 National Science Foundation CAREER Award and the 2012 IEEE Control System Technology Award. In 2008, he was appointed the chair of the IEEE technical committee on automotive control.
        \end{IEEEbiography}
\fi

\end{document}